\newtheorem{thm}{Theorem}[section]
\newtheorem{lem}[thm]{Lemma}
\newtheorem{prop}[thm]{Proposition}
\theoremstyle{definition}
\theoremstyle{remark}
\newtheorem{rem}[thm]{Remark}
\numberwithin{equation}{section}
\theoremstyle{remark}
\newtheorem{exam}[thm]{Example}
\newcommand{\mbb}{\mathbb}
\newcommand{\sm}{\setminus}
\newcommand{\no}{\noindent}
\newcommand{\cal}{\mathcal}
\newcommand{\ti}{\tilde}
\newcommand{\de}{\delta}
\begin{document}
\title{Rigidity Theorems for H\'{e}non maps-II}
\subjclass{Primary: 32F45  ; Secondary : 32Q45}
\author{Sayani Bera}

\address{SB: School of Mathematics, Ramakrishna Mission Vivekananda Educational and Research Institute, PO Belur Math, Dist. Howrah, 
West Bengal 711202, India}
\email{sayanibera2016@gmail.com}

\begin{abstract}
The purpose of this note is to explore further the rigidity properties of H\'{e}non maps from \cite{RigidityPaper}. For instance, we show that if $H$ and $F$ are H\'{e}non maps with the same Green measure ($\mu_H=\mu_F$), or the same filled Julia set ($K_H=K_F$), or the same Green function ($G_H=G_F$), then $H^2$ and $F^2$ have to commute. This in turn, gives that $H$ and $F$ have the same non--escaping sets. Further we prove that, either of the association of a H\'{e}non map $H$ to its Green measure $\mu_H$ or to its filled Julia set $K_H$ or to its Green function $G_H$ is locally injective.
\end{abstract}
\maketitle
\section{Introduction}
\no We continue to explore the rigidity properties of H\'{e}non maps from \cite{RigidityPaper}. The main motivation for \cite{RigidityPaper} was a result from the dynamics of polynomial maps in one variable by Beardon \cite{Be}, namely if two polynomials $P$ and $Q$ of degree greater than or equal to $2$, have the same Julia set, i.e., $J_P=J_Q$ then
\[ P \circ Q=\sigma \circ Q \circ P\]
where $\sigma(z)=az+b$ with $|a|=1$ and $\sigma(J_P)=J_P.$ In \cite{RigidityPaper}, we provide an analogue of this result for \textit{H\'{e}non maps} in $\mbb C^2.$ To explain it in detail, let us revisit the notations first. Let $\cal{H}$ denote the collection of maps $H$ defined as:
\begin{equation}\label{1.1}
H = H_m \circ H_{m-1} \circ \cdots \circ H_1
\end{equation}
where each $H_j$ is a map of the form
\begin{equation}\label{1.2}
H_j(x, y) = (b_jy+c_j, p_j(y) - \delta_j x) 
\end{equation}
with $p_j$ a polynomial of degree $d_j \ge 2$, $b_j\delta_j\neq 0$ and $c_j \in \mbb C.$ The degree of $H$ is $d = d_1d_2 \ldots d_m$. The phrase {\it H\'{e}non map}, hereafter will be used to refer a map as in (\ref{1.1}). The definition of H\'{e}non map deviates from the standard definition of H\'{e}non map in \textit{normal form}, as we allow the constants $b_j \in \mbb C^*$ and $c_j \in \mbb C.$ The reason, we work with this definition is the presence of this constants do not make any difference from the point of view of dynamics.

\medskip\no Recall that for a H\'{e}non map $H$, the non--escaping sets or the \textit{filled} positive and negative Julia sets of $H$ is defined as:
\[ K_H^\pm =\{ z \in \mbb C^2: H^{\pm n}(z) \text{ is bounded for every }n \ge 1\}.\]
Theorem \textbf{1.1} of \cite{RigidityPaper} says that, if an automorphism $F$ preserves the non--escaping sets of a H\'{e}non map, i.e., $F(K_H^\pm)=K_H^\pm$ then $F$ or $F^{-1}$ is a H\'{e}non map of the form (\ref{1.1}) and further, they commute upto a linear map. In particular,
\begin{align}\label{theorem 1.1}
 F \circ H=C \circ H \circ F  \text{ or }F^{-1} \circ H=C \circ H \circ F^{-1}
\end{align}
where $C(x,y)=(\delta_+ x, \delta_- y)$ with $|\de_\pm|=1.$ Though in \cite{RigidityPaper} we did not explicit mention about the presence of the constants $c_j$'s in $H$, but there presence do not significantly change any computation (see Lemma \ref{normal theorem}). The proof of this result crucially relied on techniques developed by Buzzard and Forn\ae ss in \cite{RootBuzzard}, the rigidity theorem of Dinh and Sibony in \cite{Dinh-Sibony}, and introducing the notion of B\"{o}ttcher coordinates for H\'{e}non maps of the form (\ref{1.1}), inspired from the construction of Hubbard and Oberste-Vorth in \cite{Hu-OV}.   

\medskip\no The goal of this paper is to improve the rigidity phenomenon for H\'{e}non maps further. We briefly recall the dynamical objects associated to a H\'{e}non map $H \in \cal{H}$ from \cite{BS} and \cite{BS1}. For $R > 0$, let 
\begin{align*}
V^+_R &= \{ (x,y) \in \mathbb C^2: \vert x \vert < \vert y \vert, \vert y \vert > R \},\\
V^-_R &= \{ (x,y) \in \mathbb C^2: \vert y \vert < \vert x \vert, \vert x \vert > R \},\\
V_R &= \{ (x, y) \in \mathbb C^2: \vert x \vert, \vert y \vert \le R \}.
\end{align*}
For a given $H \in \mathcal{H}$ there exists $R > 0$ such that
\[
H(V^+_R) \subset V^+_R, \; H(V^+_R \cup V_R) \subset V^+_R \cup V_R
\]
and
\[
H^{-1}(V^-_R) \subset V^-_R, \; H^{-1}(V^-_R \cup V_R) \subset V^-_R \cup V_R.
\]
Also, $K^{\pm} \subset V_R \cup V^{\mp}_R$ and
\begin{equation}\label{escape}
\mathbb C^2 \sm K^{\pm}_H = \bigcup_{n=0}^{\infty} (H^{\mp n})(V^{\pm}_R)
\end{equation}
The positive and negative Green functions associated to the H\'{e}non map $H$ is defined as:
\[
G^{\pm}_H(x, y) = \lim_{n \rightarrow \infty} \frac{1}{d^n} \log^+ \Vert H^{\pm n}(x, y) \Vert.
\]
The Green functions $G_H^\pm$ is pluri--subharmonic on $\mbb C^2$ and non--negative everywhere, pluri--harmonic on $\mbb C^2 \sm K^{\pm}_H$ and vanish precisely on $K^{\pm}_H$. By construction the Green functions satisfy
\[ G_H^\pm \circ H(z)=d G_H^\pm(z)\]
and they have a logarithmic growth near infinity, i.e., there exists $R>0$, sufficiently large such that for $(x,y) \in V_R \cup V_R^+$
\[ G_H^+(x,y)=\log|y|+O(1)\]
and for $(x,y) \in V_R \cup V_R^-$
\[ G_H^-(x,y)=\log|x|+O(1).\]  
The Green function of $H$ is defined as $G_H=\max\{G_H^+,G_H^-\}.$ Then $G_H$ is a pluri--subharmonic function with logarithmic growth in  $\mbb C^2$. The \textit{forward} and the \textit{backward Julia sets} associated to $H$ is defined as:
\[J^{\pm}_H = \partial K^{\pm}_H.\]
Further we define the sets $J_H$ and $K_H$ as follows:
\[ J_H=J_H^+ \cap J_H^- \text{ and } K_H=K_H^+ \cap K_H^-.\] 
Henceforth, by \textit{Julia set} we will mean the set $J_H$ and by \textit{filled Julia set} we will mean the set $K_H.$ It turns out that both $J_H$ and $K_H$ are compact sets, which are completely invariant under $H.$ Also, $G_H^\pm$ are the pluri--complex Green functions for $K^{\pm}_H$ respectively and $G_H$ the pluri--complex Green function of $K_H$. The supports of the positive closed $(1,1)$ currents 
\[
\mu^{\pm}_H = \frac{1}{2 \pi}dd^c G^{\pm}_H
\]
are $J^{\pm}_H.$ The \textit{Green measure} of $H$ is defined as:
$$\mu_H = \mu^+_H \wedge \mu^-_H=\Big(\frac{1}{2 \pi}dd^c G_H\Big)^2.$$ Also, $\mu_H$ is the equillibrium measure of $K_H$ which is invariant under $H$ and $H^{-1}.$
Thus for two H\'{e}non maps $H$ and $F$, if $K_H=K_F$ then $\mu_H=\mu_F.$ 

\medskip\no In Section \ref{2}, we consider the simplest form of a H\'{e}non map, i.e., $H(x,y)=(y,p(y)-a x)$ and prove the following rigidity result:
\begin{thm}\label{commute}
Suppose $H(x,y)=(y,p(y))-ax)$ and $F(x,y)=(y,q(y)-bx)$ be two (simple) H\'{e}non maps. Then each of the following are true.
\begin{itemize}
\item[(i)] If $F(K_H^\pm)=K_H^\pm,$ then $F=H.$
\item [(ii)]If $\mu_H=\mu_F,$ then $F=H.$
\item[(iii)] If $K_H=K_F,$ then $F=H.$
\item[(iv)]If $G_H=G_F,$ then $F=H.$
\end{itemize} 
\end{thm}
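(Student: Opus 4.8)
\emph{Strategy and reductions.} First I would show that the four hypotheses are equivalent, and that each of them forces $F$ to preserve the non--escaping sets of $H$, i.e.\ $F(K_H^\pm)=K_H^\pm$; the whole statement then reduces to part~(i). The implications (iv)$\Rightarrow$(iii)$\Rightarrow$(ii) are immediate from the facts recalled above ($K_\bullet=\{G_\bullet=0\}$, $G_\bullet$ is the pluri--complex Green function of $K_\bullet$, and $\mu_\bullet$ is the equilibrium measure of $K_\bullet$), and for (ii)$\Rightarrow$(iv) one uses that $G_H$ and $G_F$ belong to the Lelong class with the same logarithmic growth at infinity (which is independent of the particular map), so $\mu_H=\mu_F$ forces $G_H=G_F$ by the comparison principle for the complex Monge--Amp\`{e}re operator. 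Finally $G_H=G_F$ implies $F(K_H^\pm)=K_H^\pm$: on $V_R^+$ one has $G_\bullet=G_\bullet^+$ (there $G_\bullet^+=\log|y|+O(1)$ dominates $G_\bullet^-$), so $G_H^+=G_F^+$ on the open set $V_R^+$, hence $G_H^+\equiv G_F^+$ by unique continuation for pluri--harmonic functions, hence $K_H^+=K_F^+$; symmetrically $K_H^-=K_F^-$, and therefore $F(K_H^\pm)=F(K_F^\pm)=K_F^\pm=K_H^\pm$.

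\emph{Reducing (i) to a functional identity.} Since $F(x,y)=(y,q(y)-bx)$ is a (simple) H\'{e}non map, its inverse $F^{-1}(x,y)=(b^{-1}(q(x)-y),x)$ is \emph{not} of the form~(\ref{1.1}) --- the roles of the two coordinates are interchanged --- so in the dichotomy of Theorem~\textbf{1.1} of \cite{RigidityPaper}, recalled in~(\ref{theorem 1.1}), it is $F$ itself, not $F^{-1}$, that is the H\'{e}non map, and we are in the first alternative
\[
F\circ H=C\circ H\circ F,\qquad C(x,y)=(\delta_+ x,\delta_- y),\quad |\delta_\pm|=1 .
\]
Expanding both sides, matching first coordinates gives $a=\delta_+ b$ and $p=\delta_+ q$, and then matching second coordinates gives $\delta_+\delta_-=1$ together with the functional identity $q(\delta_+ w)\equiv q(w)$. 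In particular $\delta_+^d=1$ (comparing top coefficients; $d=\deg p=\deg q$) and $p(\delta_+ y)\equiv p(y)$; writing $\Phi(x,y)=(x,\delta_+ y)$, these relations say precisely that $H=\Phi\circ F$. Hence the theorem amounts to the single assertion $\delta_+=1$, after which $p=q$, $a=b$ and $F=H$. This is exactly the point where the simple case improves on Theorem~\textbf{1.1}, in which the residual linear map $C$ genuinely survives.

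\emph{Killing the rotation.} To prove $\delta_+=1$ I would bring in the B\"{o}ttcher coordinate $\varphi_H^+$ of $H$ constructed in \cite{RigidityPaper} (satisfying $|\varphi_H^+|=e^{G_H^+}$, $\varphi_H^+(x,y)=c\,y+\cdots$ near infinity with $c^{d-1}$ the leading coefficient of $p$, and $\varphi_H^+\circ H=(\varphi_H^+)^d$), and argue in three steps. (1)~Because $|\delta_+|=1$ the map $\Phi$ fixes $V_R^+$, and since $H=\Phi\circ F$ and $H(K_H^+)=K_H^+$ the hypothesis $F(K_H^+)=K_H^+$ is equivalent to $\Phi(K_H^+)=K_H^+$; as $\Phi$ is linear with $\|\Phi(x,y)\|=\|(x,y)\|$, the extremal characterisation of the pluri--complex Green function then gives $G_H^+\circ\Phi=G_H^+$, so on $V_R^+$ the non--vanishing holomorphic functions $\varphi_H^+\circ\Phi$ and $\varphi_H^+$ have equal modulus, whence $\varphi_H^+\circ\Phi=\omega\,\varphi_H^+$ with $|\omega|=1$, and comparing leading terms $\omega=\delta_+$. (2)~From $p(\delta_+ y)\equiv p(y)$ one checks $H\circ\Phi=\Phi'\circ H$ with $\Phi'(x,y)=(\delta_+ x,y)$; combined with the functional equation and $\delta_+^d=1$, $\varphi_H^+\circ H\circ\Phi=(\varphi_H^+\circ\Phi)^d=\delta_+^d(\varphi_H^+)^d=\varphi_H^+\circ H$, so (cancelling $H$) $\varphi_H^+\circ\Phi'=\varphi_H^+$. (3)~Differentiating $\varphi_H^+\circ H=(\varphi_H^+)^d$ in $x$ gives $\pa_x\varphi_H^+(x,y)\sim-\tfrac{a}{d}\,c^{2-d}y^{1-d}$ near infinity, which is non--zero since $a\neq0$; feeding this into the identity $\delta_+\,\pa_x\varphi_H^+(\delta_+ x,y)=\pa_x\varphi_H^+(x,y)$ obtained by differentiating $\varphi_H^+\circ\Phi'=\varphi_H^+$ forces $\delta_+=1$, and the proof is complete. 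I expect step~(1) to be the main obstacle: passing from ``$\Phi$ preserves $K_H^+$'' to ``$\Phi$ multiplies $\varphi_H^+$ by $\delta_+$'' is precisely where the finer structure developed in \cite{RigidityPaper} must be used with care.
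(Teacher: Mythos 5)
Your overall architecture is reasonable and your endgame is genuinely different from the paper's, but the reduction of (ii)--(iv) to (i) has a real gap. First, the assertion that $G_\bullet=G_\bullet^+$ on all of $V_R^+$ is not free: at points of $V_R^+$ where $|x|$ and $|y|$ are both large and comparable, $G_\bullet^-\approx\log|x|$ competes with $G_\bullet^+\approx\log|y|$. The paper proves the domination only on the much thinner strips $D^2(R_0)=\{|x|<R_H,\ |y|>R_0\}$ (Proposition \ref{Green-main}), and that already costs a page of iteration estimates; equality of $G_H^+$ and $G_F^+$ is then propagated from $D^2(R_0)$ to $V_R^+$ by pluriharmonicity. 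That part of your sketch is repairable. What does not work as stated is the next step: passing from $G_H^+=G_F^+$ on $V_R^+$ to $G_H^+\equiv G_F^+$ on $\mathbb{C}^2$ ``by unique continuation'' and hence to $K_H^+=K_F^+$. The difference $G_H^+-G_F^+$ is pluriharmonic only on $\mathbb{C}^2\setminus(K_H^+\cup K_F^+)$; continuation from $V_R^+$ at best forces it to vanish on the component of that set containing $V_R^+$, and it says nothing at points of $K_H^+\triangle K_F^+$ lying in the interior of one of the two sets (recall $K^+$ may have nonempty interior). The paper sidesteps this entirely: the proof of its commutation theorem (Theorem \ref{relation}) uses the Green functions only through the B\"{o}ttcher coordinates on $V_R^\pm$, so agreement there already yields the commutation relation, and the global identities $K_H^\pm=K_F^\pm$ are deduced only \emph{afterwards} (Theorem \ref{KHpm}) as a consequence of commutation. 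You should either restructure your reduction the same way or supply an actual proof of the global identity.

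On the endgame: be aware that the single relation $F\circ H=C\circ H\circ F$ from (\ref{theorem 1.1}) does \emph{not} algebraically force $\delta_+=1$, even for simple H\'{e}non maps. For instance $q(y)=y^2$, $p=-q$, $a=-b$, $\delta_+=\delta_-=-1$ satisfies all of your derived identities ($p=\delta_+q$, $a=\delta_+b$, $\delta_+\delta_-=1$, $q(\delta_+w)=q(w)$) with $F\neq H$. So your steps (1)--(3), which reuse the hypothesis $F(K_H^+)=K_H^+$ to show that $\Phi$ preserves $K_H^+$ and hence acts on $\varphi_H^+$ by the constant $\delta_+$, are doing essential work and cannot be omitted; they appear correct in outline, though the asymptotic $\pa_x\varphi_H^+\sim -\tfrac{a}{d}c^{2-d}y^{1-d}$ needs justification (you must show $\pa_y\varphi_H^+$ tends to a nonzero constant, e.g.\ via Cauchy estimates on polydiscs of radius comparable to $|y|$). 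The paper's route is considerably shorter: its Theorem \ref{relation} strengthens (\ref{theorem 1.1}) to give in addition $F^{-1}\circ H^{-1}=C_\eta\circ H^{-1}\circ F^{-1}$ with the \emph{same} $\eta$ (after checking, via the derivative matrix $A=DH(F(0))DF(0)$, that the two diagonal factors coincide for simple maps), and comparing second coordinates in the inverse relation yields $p=q$ outright; together with $p=\eta q$ this forces $\eta=1$ in two lines, with no B\"{o}ttcher derivatives needed.
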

\no Note that Theorem \textbf{\ref{commute}} is an analogue of Beardon's result (\cite{Be}) for simple H\'{e}non maps. To prove this, we use facts about the uniqueness of pluri--subharmonic functions with logarithmic growth from \cite{BedfordTaylor} and conclude that any two H\'{e}non maps with the same Green measure have the same Green function. Next, we explore the dynamical properties of a H\'{e}non map in appropriate regions and prove  that the positive and negative Green functions of these H\'{e}non maps actually coincide on $V_R^\pm$, respectively for appropriately chosen large $R.$ Finally, by appealing to techniques similar to the proof of Theorem \textbf{1.1} in \cite{RigidityPaper} the result follows. 

\medskip\no In Section \ref{3}, we first establish a rigidity result similar to Theorem \textbf{1.1} from \cite{RigidityPaper}, which is stated as follows:
\begin{thm}\label{improved_relation}
Let $F$ be a non--linear automorphism that preserves $K_H^\pm$ where $H$ is a H\'{e}non map then $F$ or $F^{-1}$ is a H\'{e}non map and $$F^2 \circ H^2=H^2 \circ F^2.$$
\end{thm}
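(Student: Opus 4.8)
The plan is first to feed the hypothesis into the main theorem of \cite{RigidityPaper}. Since $F$ is a non-linear automorphism with $F(K_H^\pm)=K_H^\pm$, that theorem, recalled in (\ref{theorem 1.1}), says $F$ or $F^{-1}$ is a H\'enon map of the form (\ref{1.1}) and is conjugate to $H$ up to a diagonal linear map $C(x,y)=(\delta_+x,\delta_-y)$ with $|\delta_\pm|=1$. Since $F^{-1}$ also preserves $K_H^\pm$ and since $F^2$ commutes with $H^2$ if and only if $F^{-2}$ does, I may assume that $F$ itself is a H\'enon map and that
\[
F\circ H\circ F^{-1}=C\circ H .
\]
Thus $C=F\circ H\circ F^{-1}\circ H^{-1}$. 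Every map of the form (\ref{1.1}), as well as its inverse, has constant nonzero Jacobian determinant, so the Jacobian of this commutator is identically $1$; hence $\det C=\delta_+\delta_-=1$, and I write $C=\operatorname{diag}(\delta,\delta^{-1})$ with $|\delta|=1$. A short direct computation using the complete invariance of $K_H^\pm$ under $H$ together with the hypothesis on $F$ also gives $C(K_H^\pm)=K_H^\pm$.

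Next I want to control this linear factor under iteration. The map $C\circ H$ is again a H\'enon map of the form (\ref{1.1}) (absorb $C$ into the outermost factor $H_m$), and $K_{C\circ H}^\pm=F(K_H^\pm)=K_H^\pm$, so $F$ preserves its non-escaping sets as well. Applying the theorem of \cite{RigidityPaper} to $C\circ H$ yields $F\circ(C\circ H)\circ F^{-1}=C'\circ(C\circ H)$ with $C'$ again linear, diagonal and unimodular; comparing with $F\circ(C\circ H)\circ F^{-1}=(F\circ C\circ F^{-1})\circ(C\circ H)$ forces $F\circ C\circ F^{-1}=C'$. Iterating, all the conjugates $C_n:=F^n\circ C\circ F^{-n}$ are linear, diagonal, unimodular maps preserving $K_H^\pm$ with $\det C_n=1$. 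Unwinding $F\circ H\circ F^{-1}=C\circ H$ then gives, by induction,
\[
F^n\circ H\circ F^{-n}=\big(C_{n-1}\circ C_{n-2}\circ\cdots\circ C_0\big)\circ H=:\Pi_n\circ H ,
\]
where each $\Pi_n$ is a linear, diagonal, unimodular map, $\det\Pi_n=1$, preserving $K_H^\pm$.

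Taking $n=2$ and writing $\Pi:=\Pi_2=(F\circ C\circ F^{-1})\circ C$, we have $F^2\circ H\circ F^{-2}=\Pi\circ H$, hence
\[
F^2\circ H^2\circ F^{-2}=(\Pi\circ H)^2=\Pi\circ\big(H\circ\Pi\circ H^{-1}\big)\circ H^2 ,
\]
so the theorem is equivalent to the identity $H\circ\Pi\circ H^{-1}=\Pi^{-1}$. To prove it I would use the B\"ottcher coordinates of $H$ near infinity, constructed in \cite{RigidityPaper} following \cite{Hu-OV}: a linear map $\Lambda=\operatorname{diag}(\mu,\mu^{-1})$ preserving $K_H^\pm$ carries $V_R^\pm$ into itself, and $\varphi_H^+\circ\Lambda$ is a B\"ottcher coordinate for the conjugate H\'enon map $\Lambda^{-1}\circ H\circ\Lambda$ (which again preserves $K_H^\pm$); comparing this with $\varphi_H^+$ through the degree-$d$ functional equation and the normalization at infinity, and running the symmetric computation with $\varphi_H^-$ on $V_R^-$, should force $H\circ\Lambda\circ H^{-1}=\Lambda^{-1}$ for every such $\Lambda$ with $\det\Lambda=1$. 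Applying this with $\Lambda=\Pi$ finishes the proof.

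The main obstacle is this last identity. A single semi-conjugacy $F\circ H=C\circ H\circ F$ does not, by itself, force $F^2$ and $H^2$ to commute (this already fails at the level of abstract groups), so the argument must genuinely use that $C$ and all the linear factors it generates arise from \emph{linear} symmetries of the non-escaping sets; and it is precisely the passage from $(F,H)$ to $(F^2,H^2)$ that lets the cross term $H\circ\Pi\circ H^{-1}$ cancel against $\Pi^{-1}$, which is why one obtains $F^2\circ H^2=H^2\circ F^2$ rather than the stronger $F\circ H^2=H^2\circ F$.
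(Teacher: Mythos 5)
Your reduction is clean up to the point where everything is made to hinge on the identity $H\circ\Pi\circ H^{-1}=\Pi^{-1}$: the derivation of $F\circ H\circ F^{-1}=C\circ H$ with $\det C=1$, the observation that $F\circ C\circ F^{-1}$ is again diagonal and unimodular, and the algebra giving $F^2\circ H^2\circ F^{-2}=\Pi\circ\bigl(H\circ\Pi\circ H^{-1}\bigr)\circ H^2$ are all correct. The gap is in the last step: the lemma you propose to prove --- that \emph{every} linear map $\Lambda=\operatorname{diag}(\mu,\mu^{-1})$ with $\det\Lambda=1$ preserving $K_H^\pm$ satisfies $H\circ\Lambda\circ H^{-1}=\Lambda^{-1}$ --- is false, so no B\"ottcher--coordinate computation can establish it. The parity of the number of elementary factors of $H$ intervenes. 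Take the data of Example \ref{opti-com}: $h(x,y)=(y,y^2-x)$ and $\Lambda=C_\omega(x,y)=(\omega x,\omega^2 y)$ with $\omega$ a primitive cube root of unity; then $C_\omega=F\circ h^{-1}$ preserves $K_h^\pm$ and a direct computation gives $h\circ C_\omega\circ h^{-1}=C_\omega^{-1}$. Hence for the H\'enon map $H=h\circ h$, which has the same sets $K^{\pm}$, one gets $H\circ C_\omega\circ H^{-1}=C_\omega\neq C_\omega^{-1}$. So conjugation by $H$ inverts $\Lambda$ or fixes it depending on the factorization of $H$, and your target identity cannot hold for an arbitrary such $\Lambda$; it has to be proved for the specific $\Pi=(F\circ C\circ F^{-1})\circ C$, using how $\Pi$ is built from $F$. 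That is exactly where the content of the theorem lies, and the sketch does not supply it.

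For comparison, the paper closes this gap by a different route. Theorem \ref{relation} first upgrades the one--sided relation to the two--sided one $F\circ H=C_\eta\circ H\circ F=H\circ F\circ C_\eta^{\pm 1}$, with the \emph{same} $C_\eta$ appearing on both sides (this uses the B\"ottcher coordinates of both $H$ and $F$ on $V_R^+$ and of $H^{-1}$ and $F^{-1}$ on $V_R^-$, plus a chain--rule/determinant argument). One then translates a fixed point of $H$ to the origin, puts the compositions in normal form with all $p_i(0)=0$ (Lemmas \ref{normal theorem}--\ref{0-fixed 2}), and splits into cases: if some $p_i$ or $q_j$ has a linear term, Lemma \ref{0-fixed} forces $F\circ H=\pm H\circ F$, which already yields commutation of the squares; if not, $DF^2(0)$ and $DH^2(0)$ are diagonal and a derivative computation at $0$ forces the linear factor in the relation between $F^2$ and $H^2$ to be the identity. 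If you want to salvage your approach you would need to prove $H\circ\Pi\circ H^{-1}=\Pi^{-1}$ from the specific origin of $\Pi$, which in effect reproduces this fixed--point and normal--form analysis.
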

\no To prove this, without loss of generality first we assume that origin is fixed by the H\'{e}non map $H$, and prove that $H^2$ can be uniquely expressed (upto composition by linear maps) in the \textit{normal form} such that
\[ H^2=H_m \circ \cdots \circ H_1\]
where $H_i(x,y)=(y,p_i(y)-\de_i x)$ and $H_i(0)=0$ for every $1 \le i \le m.$ Note that as a consequence of Theorem \textbf{1.1} from \cite{RigidityPaper}, $F$ and $H$ satisfy (\ref{theorem 1.1}). Now by further analyzing closely the behavior of the linear map $C$, it is possible to remove the linear map in the second iterate of $H$ and $F.$ In this context, we also provide an explicit example to emphasize that $H$ and $F$ might fail to commute even though their squares commute. Further appealing to ideas from Section \ref{2}, we prove a version of Theorem \textbf{\ref{commute}}, particularly for H\'{e}non maps, which is stated as follows:
\begin{thm}\label{KHpm}
Let $F$ and $H$ are H\'{e}non maps such that either $K_H=K_F$ or $\mu_H=\mu_F$ or $G_H=G_F.$ 
Then $K_H^\pm=K_F^\pm$ and $G_H^\pm=G_F^\pm.$
\end{thm}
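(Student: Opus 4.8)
The plan is to reduce everything to the equality of Green functions $G_H = G_F$ and then promote this to the equality of the one-sided objects $G_H^\pm = G_F^\pm$ and $K_H^\pm = K_F^\pm$. First I would dispose of the implications among the three hypotheses. If $K_H = K_F$ then $\mu_H = \mu_F$ since $\mu_H$ is the equilibrium measure of $K_H$ (as noted in the introduction). If $\mu_H = \mu_F$, then since $G_H$ and $G_F$ are the pluri-complex Green functions of $K_H$ and $K_F$ respectively — i.e. the unique non-negative pluri-subharmonic functions with logarithmic growth satisfying $(dd^c G)^2 = 2\pi\,\mu$ and vanishing on the relevant compact set — the uniqueness theory for pluri-subharmonic functions with logarithmic growth from \cite{BedfordTaylor} forces $G_H = G_F$ (this is exactly the reduction already used in the discussion following Theorem \ref{commute}, and I would invoke it verbatim). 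So in all three cases we may assume $G_H = G_F =: G$, hence also $K_H = \{G = 0\} = K_F$.

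Next I would exploit the logarithmic growth of the one-sided Green functions near infinity. Recall $G_H^+(x,y) = \log|y| + O(1)$ on $V_R \cup V_R^+$ and $G_H^-(x,y) = \log|x| + O(1)$ on $V_R \cup V_R^-$, and $G_H = \max\{G_H^+, G_H^-\}$. The key observation is that on $V_R^+$ the function $\log|y|$ dominates $\log|x|$ (since $|x| < |y|$ there), so on $V_R^+$ we have $G_H = G_H^+$ up to the error terms; symmetrically $G_H = G_H^-$ on $V_R^-$. Thus from $G_H = G_F$ I get $G_H^+ = G_F^+$ on $V_R^+$ and $G_H^- = G_F^-$ on $V_R^-$, at least after enlarging $R$ so both maps satisfy the filtration estimates. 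This is precisely the step carried out in Section \ref{2} for simple H\'enon maps ("the positive and negative Green functions of these H\'enon maps actually coincide on $V_R^\pm$"), and I expect the same argument — comparing asymptotics and using that $V_R^\pm$ are in the pluri-harmonic locus — to go through for general $H \in \mathcal H$.

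The final and hardest step is the propagation from $V_R^\pm$ to all of $\mathbb C^2$. Here I would use the functional equations $G_H^+ \circ H = d\, G_H^+$ together with the escape description $\mathbb C^2 \setminus K_H^+ = \bigcup_{n \ge 0} H^{-n}(V_R^+)$. The subtlety is that the two maps $H$ and $F$ need not be equal, so I cannot simply pull back by the same map; instead I would invoke Theorem \ref{improved_relation}: since $K_H = K_F$ means $F$ preserves $K_H^\pm$ (because $K_H^\pm$ are determined by $K_H$ together with the filtration, or more carefully via $K_H = K_H^+ \cap K_H^-$ and the structure theory), $F$ or $F^{-1}$ is a H\'enon map and $F^2 \circ H^2 = H^2 \circ F^2$. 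Then $G_H^+$ and $G_F^+$ are both pluri-harmonic on $\mathbb C^2 \setminus K_H^+$, agree on the open set $V_R^+$, and satisfy compatible transformation rules under the commuting iterates $H^2, F^2$; an analytic-continuation / identity-principle argument on the connected complement of $K_H^+$ (which is connected, being the escaping set) then forces $G_H^+ = G_F^+$ everywhere, hence $K_H^+ = \{G_H^+ = 0\} = K_F^+$, and symmetrically on the negative side. The main obstacle I anticipate is making rigorous the claim that $K_H = K_F$ (a statement only about the intersection $K_H^+ \cap K_H^-$) actually yields $F(K_H^\pm) = K_H^\pm$ so that Theorem \ref{improved_relation} applies — this likely requires re-deriving the one-sided coincidence on $V_R^\pm$ first (as above) and then bootstrapping, rather than the reverse, so the logical order of the last two steps may need to be interleaved carefully.
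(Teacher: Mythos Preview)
Your reduction to $G_H=G_F$ and the idea of first matching $G_H^+=G_F^+$ on (a piece of) $V_R^+$ are correct and are exactly what the paper does (this is Proposition \ref{Green-main} and the proof of Theorem \ref{Green}; note the comparison $G_H^->G_H^+$ is only established on the strips $D^2(R_0)$, not on all of $V_R^+$, since $G_H^-$ is unbounded there). The gap is in your propagation step, and it is the circularity you yourself flag.

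You cannot invoke Theorem \ref{improved_relation}: its hypothesis is $F(K_H^\pm)=K_H^\pm$, and $K_H=K_F$ says nothing a priori about the individual sets $K_H^\pm$. Your fallback analytic-continuation argument also fails as written: $G_F^+$ is pluri-harmonic on $\mathbb C^2\setminus K_F^+$, not on $\mathbb C^2\setminus K_H^+$, so you cannot run the identity principle on the latter connected domain without already knowing $K_H^+=K_F^+$.

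The paper breaks the circle as follows. From $\mu_H=\mu_F$ alone, Theorem \ref{Green} (whose proof is precisely your step 2) yields the almost-commutation $F\circ H=C_\eta\circ H\circ F=H\circ F\circ C_\eta^{\pm 1}$. The \emph{argument} of Theorem \ref{improved_relation} --- which uses only this relation, not the hypothesis $F(K_H^\pm)=K_H^\pm$ --- then upgrades it to $F^2\circ H^2=H^2\circ F^2$. Once the squares commute, the conclusion is elementary and set-theoretic: for $z\in K_H^+$ one has $H^{2n}(F^2z)=F^2(H^{2n}z)$ bounded, so $F^{\pm 2}(K_H^+)\subset K_H^+$; thus $F^{2n}(z)$ stays in $K_H^+\subset V_R\cup V_R^-$ for all $n$, giving $z\in K_{F^2}^+=K_F^+$. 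By symmetry $K_H^+=K_F^+$, and the negative side is identical. No analytic continuation across $\mathbb C^2\setminus K_H^+$ is needed; the commutation of squares replaces it entirely.
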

\no Observe that Theorem \textbf{\ref{improved_relation}} applied to Theorem \textbf{\textbf{\ref{KHpm}}}, gives that $H^2$ and $F^2$ actually commutes. To mention here, Lamy in \cite{Lamy} proved that if two H\'{e}non maps $H$ and $F$ have the same positive Green function, i.e., $G_H^+=G_F^+$ then there exist integers $m$ and $n$ such that $F^m=H^n.$ Note that Theorem \textbf{\ref{KHpm}} improves this result, in the sense if $H$ and $F$ are H\'{e}non maps such that $G_H=G_F$, (or $K_H=K_F$, or $\mu_H=\mu_F$) then there exist positive integers $m$ and $n$ such that $F^m=H^n.$ Also, Dujardin and Favre in connection to their work on Manin--Mumford problem for plane polynomial automorphisms, in \cite{DF}, used number theoretic techniques to show that, if $F$ and $G$ are polynomial automorphisms of H\'{e}non type of the affine plane over a number field that share a Zariski dense set of periodic points, then there exist positive integers $m$ and $n$ such that $F^m=G^n.$

\medskip\no Finally, we consider the space of H\'{e}non maps $\cal{H}$ with the topology of uniform convergence over compact sets and consider either of the associations -- $H$ to its Green measure or $H$ to its Green function or $H$ to its filled Julia set, i.e., $H \to G_H$ or $H \to \mu_H$ or $H \to K_H$ on $\cal{H}.$ By using Theorem \textbf{\ref{improved_relation}} and the fact that, roots of a H\'{e}non map are finite from \cite{RootBuzzard} we prove that this association is locally injective in either of the cases. The statement of the theorem is stated as:
\begin{thm}\label{injective}
Let $\mathcal{H}$ denote the space H\'{e}non maps then the mappings $H \to G_H$, $H \to \mu_H$ and $H \to K_H$ is locally injective on $\mathcal{H}.$
\end{thm}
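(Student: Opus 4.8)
The plan is to argue by contradiction, feeding Theorem \ref{KHpm} into Theorem \ref{improved_relation} and then using the finiteness of roots of a H\'enon map from \cite{RootBuzzard}. Since $K_H=K_F$ implies $\mu_H=\mu_F$ and all three hypotheses are covered by Theorem \ref{KHpm}, it suffices to treat $H\mapsto G_H$. Fix $H\in\mathcal H$ and suppose this association fails to be injective on every neighbourhood of $H$ in $\mathcal H$; then there are H\'enon maps $F_j\to H$ with $F_j\neq H$ and $G_{F_j}=G_H$. (For injectivity on a neighbourhood, rather than just discreteness of the fibre of $G_H$ through $H$, one runs the same argument on two sequences $F_j,F_j'\to H$ with $G_{F_j}=G_{F_j'}$, applying Theorem \ref{KHpm} to the pair.) By Theorem \ref{KHpm}, $K^{\pm}_{F_j}=K^{\pm}_H$, so each $F_j$ preserves $K^{\pm}_H$; being non-linear, $F_j$ satisfies the hypotheses of Theorem \ref{improved_relation}, which gives
\[
F_j^2\circ H^2=H^2\circ F_j^2 .
\]

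The next step is to put $F_j^2$ into a rigid normal form. Since $F_j^2$ commutes with the H\'enon map $H^2$, it lies in the centralizer of $H^2$ in $\mathrm{Aut}(\mathbb C^2)$; this centralizer is virtually cyclic (see \cite{Lamy}), so there are a single H\'enon map $g$ and a finite group $\Lambda$ of linear maps commuting with $g$ — both depending only on $H$ — such that $F_j^2=\ell_j\circ g^{\,a_j}$ for some $\ell_j\in\Lambda$ and $a_j\in\mathbb Z\setminus\{0\}$ (the exponent is nonzero because the H\'enon map $F_j^2$ has infinite order). The decisive feature is that $g$ and $\Lambda$ are fixed. Alternatively, avoiding the centralizer computation, one may combine Theorem \ref{improved_relation} with the other theorem of Lamy quoted above — $G^{+}_{F_j^2}=G^{+}_{H^2}$ forces $(F_j^2)^m=(H^2)^n$ — so that $\langle F_j^2,H^2\rangle$ is abelian of rank one modulo torsion, and then observe that its cyclic generator must be a root of one of the finitely many H\'enon maps obtained from $H^2$ by composing with a linear map commuting with $H^2$; by \cite{RootBuzzard} there are only finitely many such roots, so after passing to a subsequence the generator is again a fixed H\'enon map $g$.

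Now I would pass to the limit. Along a subsequence, $\ell_j$ is constant, say $\ell_j=\ell$, since $\Lambda$ is finite. If $|a_j|\to\infty$, pick a point $z_0$ whose forward (if $a_j\to+\infty$) or backward (if $a_j\to-\infty$) $g$-orbit escapes to infinity — possible because $K^{+}_g$ and $K^{-}_g$ are proper subsets of $\mathbb C^2$; then $g^{\,a_j}(z_0)\to\infty$, hence $\ell(g^{\,a_j}(z_0))\to\infty$ because $\ell$ is a linear automorphism, contradicting $F_j^2(z_0)\to H^2(z_0)$. Hence the $a_j$ are bounded, so along a further subsequence $a_j$ is constant and $F_j^2=\ell\circ g^{\,a_j}=H^2$ for all large $j$. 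Thus each such $F_j$ is a square root of the fixed H\'enon map $H^2$, and by \cite{RootBuzzard} there are only finitely many of these, one of them being $H$; as $F_j\to H$, this forces $F_j=H$ for large $j$, contradicting $F_j\neq H$. This proves that $H\mapsto G_H$ is locally injective, and the cases $H\mapsto\mu_H$ and $H\mapsto K_H$ follow immediately via Theorem \ref{KHpm}.

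I expect the middle paragraph to be the main obstacle: one has to ensure that neither the common root $g$ nor the linear correction $\ell_j$ drifts along the sequence, and this is exactly where the finiteness of roots \cite{RootBuzzard}, together with the finiteness of the group of linear maps commuting with a fixed H\'enon map, is used. Once this discreteness is in hand, turning the convergence $F_j\to H$ in $\mathcal H$ into the honest equality $F_j=H$ is the soft escape-to-infinity argument above.
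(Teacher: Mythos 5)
Your argument is correct and shares the paper's overall skeleton --- reduce to $G_H$ via Theorem \ref{KHpm}, use Theorem \ref{improved_relation} to make $F_j^2$ commute with $H^2$, force $F_j^2=H^2$ for large $j$, and finish with the finiteness of compositional square roots from \cite{RootBuzzard} --- but the step that pins down $F_j^2$ is carried out genuinely differently. The paper works with the automorphism $C_j=F_j^2\circ H^{-2}$: it is linear by a degree count using the functional equation for $G_H^+$, it conjugates $H^2$ to itself, and it is then bounded by hand --- its translation part must lie in the finite fixed-point set of $H^2$, and after putting $H^2$ in normal form its diagonal entries are roots of unity of order dividing $d_md_{m-1}-1$. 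This is elementary and self-contained, needing only Lemmas \ref{normal theorem} and \ref{0-fixed 2}. You instead invoke Lamy's structure theorem that the centralizer of a H\'{e}non-type automorphism is virtually cyclic, a much heavier black box than what the paper takes from \cite{Lamy} (cited there only for $G_F^+=G_H^+\Rightarrow F^m=H^n$); your escape-to-infinity argument to bound the exponents $a_j$ then works, and the one imprecision --- the coset representatives $\ell_j$ need not be linear maps commuting with $g$, merely a fixed finite set of automorphisms, which is all you actually use --- is harmless. Your parenthetical point that isolation of $H$ in its fibre is formally weaker than injectivity on a neighbourhood is fair, but it applies equally to the paper's own proof.
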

\subsection*{Acknowledgements} The author would like to thank Ratna Pal and Kaushal Verma for introducing her to the problem. 
\section{Proof of Theorem \ref{commute}}\label{2}
\no In this section, we first prove a development to Theorem \textbf{1.1} from \cite{RigidityPaper}.
\begin{thm}\label{relation}
Let $H$and $F$ be  H\'{e}non maps of the form (\ref{1.1}) such that $F$ preserves $K_H^\pm$ ($F(K_H^\pm)=K_H^\pm$) then there exists an $\eta$ such that $|\eta|=1$ and
\[ F\circ H=C_\eta \circ H \circ F=H \circ F \circ C_{\eta} \text{ or }F\circ H=C_\eta \circ H \circ F=H \circ F \circ C_{\eta}^{-1}.\]
where $C_\eta(x,y)=(\eta x,\eta^{-1} y).$ 
\end{thm}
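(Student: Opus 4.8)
The plan is to start from Theorem~1.1 of \cite{RigidityPaper}, which (in the form recorded as \eqref{theorem 1.1}) already gives that $F$ or $F^{-1}$ is a H\'enon map and that
\[ F\circ H=C\circ H\circ F \quad\text{or}\quad F^{-1}\circ H=C\circ H\circ F^{-1},\]
where $C(x,y)=(\de_+x,\de_-y)$ with $|\de_\pm|=1$. The task is therefore twofold: first, to show that the diagonal constants in $C$ are reciprocal, i.e. $\de_-=\de_+^{-1}$, so that $C=C_\eta$ with $\eta=\de_+$; and second, to derive the additional identity $C_\eta\circ H\circ F=H\circ F\circ C_\eta^{\pm1}$. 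For the first point I would compare the asymptotic (B\"ottcher/logarithmic) behavior of both sides near infinity: applying $G_H^+$ to $F\circ H=C\circ H\circ F$ and using $G_H^+\circ H=d\,G_H^+$ together with the fact that $F$ preserves $K_H^+$ (hence $G_H^+\circ F=G_H^+$ up to the appropriate scaling, or more precisely $G_H^+$ and $G_F^+$ are comparable since both vanish exactly on the same set $K_H^+$) pins down how $C$ must act on the $V_R^+$ and $V_R^-$ ends. Because $C$ is linear and must respect the splitting coming from $V_R^\pm$ (the $|y|$ large versus $|x|$ large regimes governing $G_H^+$ and $G_H^-$), and because it conjugates the H\'enon dynamics which has the product-like B\"ottcher structure, the two eigenvalues are forced to multiply to $1$; this is exactly the kind of computation done in \cite{RigidityPaper} and \cite{Hu-OV}, now tracked one step more carefully.

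For the second identity, the key observation is that conjugation by $H$ turns the relation $F\circ H=C_\eta\circ H\circ F$ around. Rewrite it as $H^{-1}\circ F\circ H = H^{-1}\circ C_\eta\circ H\circ F$; alternatively, and more usefully, apply $H$ on the right after rearranging: from $F\circ H=C_\eta\circ H\circ F$ one gets $F=C_\eta\circ H\circ F\circ H^{-1}$, and feeding this back into itself (or composing the relation with its $H$-conjugate) produces $C_\eta\circ H\circ F = H\circ F\circ (H^{-1}\circ C_\eta\circ H)$. The point is then to show that $H^{-1}\circ C_\eta\circ H$ is again a linear map of the form $C_\eta^{\pm1}$. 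Here I would use the normal-form structure: if $H$ fixes the origin (which one can arrange by conjugating with a translation, as the paper notes via Lemma~\ref{normal theorem}), then the linearization $DH(0)$ is upper- or lower-triangular with a specific product structure, and $C_\eta$ commutes with the dominant part of this linearization up to replacing $\eta$ by $\eta^{-1}$ (the swap $\eta\leftrightarrow\eta^{-1}$ corresponding to the two branches $H\circ F\circ C_\eta$ versus $H\circ F\circ C_\eta^{-1}$, which in turn mirrors the dichotomy "$F$ is a H\'enon map" versus "$F^{-1}$ is a H\'enon map"). So the conjugate $H^{-1}\circ C_\eta\circ H$, being an automorphism that both commutes appropriately with the weight filtration and is forced (by comparing degrees — $H$ and $C_\eta\circ H\circ C_\eta^{\mp1}$ must have the same degree $d$) to be linear, equals $C_\eta^{\pm1}$.

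The main obstacle I anticipate is the rigidity argument showing $H^{-1}\circ C_\eta\circ H$ is \emph{linear} rather than merely an automorphism conjugating $H$-dynamics to itself: a priori it is an automorphism $G$ with $G\circ H=H\circ G\circ C_\eta^{\mp 1}$-type relations, and one must exclude the possibility that $G$ is itself a nontrivial H\'enon map. This is where the finiteness-of-roots input from \cite{RootBuzzard} and the degree bookkeeping become essential: an automorphism commuting with $H$ up to a linear twist must preserve $K_H^\pm$, hence by Theorem~1.1 is again H\'enon or inverse-H\'enon, and a degree comparison ($\deg(H^{-1}\circ C_\eta\circ H)=\deg H=d$ forces the conjugating automorphism to have degree $1$) closes the loop. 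A secondary technical point is handling the two cases ("$F$ H\'enon" vs.\ "$F^{-1}$ H\'enon") in parallel and verifying the signs $C_\eta$ vs.\ $C_\eta^{-1}$ match up consistently; I would treat the $F^{-1}$ case by replacing $F$ with $F^{-1}$ throughout and tracking how $C_\eta$ transforms, which should be a routine but careful symmetry argument once the $F$-case is settled.
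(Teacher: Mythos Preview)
Your second step contains a genuine gap. The identity you claim to derive, $C_\eta\circ H\circ F = H\circ F\circ (H^{-1}\circ C_\eta\circ H)$, does \emph{not} follow from $F\circ H=C_\eta\circ H\circ F$ by the manipulation you describe, and more seriously the map $H^{-1}\circ C_\eta\circ H$ is in general \emph{not} linear: for $H(x,y)=(y,y^2-x)$ one computes $H^{-1}\circ C_\eta\circ H(x,y)=\big((\eta^2-\eta^{-1})y^2+\eta^{-1}x,\,\eta y\big)$, which is linear only when $\eta^3=1$. So the object you propose to analyse is the wrong one (the relevant conjugation is by $H\circ F$, not by $H$), and even the correct object $(H\circ F)^{-1}\circ C_\eta\circ(H\circ F)$ cannot be identified with $C_\eta^{\pm1}$ from the single relation $F\circ H=C_\eta\circ H\circ F$ by algebra alone; some further analytic input is needed. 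Your degree/roots argument does not supply it, since a priori nothing forces $H^{-1}\circ C_\eta\circ H$ to preserve $K_H^\pm$.

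The paper proceeds quite differently and does \emph{not} bootstrap from Theorem~1.1 of \cite{RigidityPaper}. It reruns the B\"ottcher-coordinate argument directly: using $\phi^+=\phi_H^+=\phi_F^+$ on $V_R^+$ one obtains $\pi_2\circ F\circ H=\de_1\,\pi_2\circ H\circ F$, and, by applying $\phi^+$ to $F^{-1}\circ H^{-1}$ and $H^{-1}\circ F^{-1}$ on the open set $F\circ H(V_R^+)\cap H\circ F(V_R^+)$, also $\pi_2\circ F^{-1}\circ H^{-1}=\ti\de_1\,\pi_2\circ H^{-1}\circ F^{-1}$; the analogous relations for $\pi_1$ come from $\phi^-$ on $V_R^-$. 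This yields \emph{two independent} diagonal linear maps $C_1,C_2$ with $F\circ H=C_1\circ H\circ F$ and $F^{-1}\circ H^{-1}=C_2\circ H^{-1}\circ F^{-1}$; inverting the second gives the missing $H\circ F\circ C$ identity for free. The relation between $C_1$ and $C_2$ is then pinned down by differentiating at a point (yielding $D_2A=AD_1$ with $A=DH(F(0))DF(0)$), and $\de_+\de_-=1$ comes from the one-line observation you omit: H\'enon maps have constant Jacobian, so taking determinants in $F\circ H=C_i\circ H\circ F$ forces $\det C_i=1$.
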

\begin{proof}
%
%
Let $d_H$ denote the degree of $H.$ From Proposition \textbf{2.1} of \cite{RigidityPaper}, there exists $R>0$ (sufficiently large), appropriate non--zero constants $c_H,c'_H,c_F,c'_F \in \mbb C^*$ and non--vanishing holomorphic functions (B\"{o}ttcher coordinates)
$\phi_H^\pm, \phi_F^\pm: V_R^\pm \rightarrow \mathbb{C}$  such that 
\begin{align}\label{Bot1}
\phi_H^+\circ H(x,y)=c_H{(\phi_H(x,y))}^{d_H} \text { and }\phi_F^+\circ F(x,y)=c_F{(\phi_F(x,y))}^{d_F}
\end{align}
in $V_R^+$ and 
\begin{align}\label{Bot2}
\phi_H^-\circ H^{-1}(x,y)=c_H'{(\phi_H^-(x,y))}^{d_H} \text { and }\phi_F^-\circ F^{-1}(x,y)=c_F'{(\phi_F(x,y))}^{d_F}
\end{align}
in $V_R^-$. Further,
\[
\phi_F^+(x,y),\phi_H^+(x,y)\sim y \text{ as } \lVert(x,y)\rVert\rightarrow \infty \text{ in } V_R^+
\]
and
\[
\phi_F^-(x,y), \phi_H^-(x,y)\sim x \text{ as } \lVert(x,y)\rVert\rightarrow \infty \text{ in } V_R^-.
\]
Also, from the proof of Theorem \textbf{1.1} in \cite{RigidityPaper} the positive and negative Green function of $F$ and $H$ coincides, i.e., $G_H^\pm=G_F^\pm$ and 
\begin{equation}\label{Green1}
G_H^+=\log \lvert\phi_H^+\rvert+\frac{1}{d_H-1}\log \lvert c_H\rvert=\log \lvert\phi_F^+\rvert+\frac{1}{d_F-1}\log \lvert c_F\rvert=G_F^+
\end{equation}
in $V_R^+$,
\begin{equation}\label{Green2}
G_H^-=\log \lvert\phi_H^-\rvert+\frac{1}{d_H-1}\log \left\lvert {c_H'}\right\rvert=\log \lvert\phi_F^-\rvert+\frac{1}{d_F-1}\log \lvert c'_F\rvert=G_F^-
\end{equation}
in $V_R^-.$ Since $\phi_H^+$ and $\phi_F^+$ are both asymptotic to $y$ as $\lVert(x,y)\rVert\rightarrow \infty$ in $V_R^+$, it follows that 
\begin{equation}\label{const}
\frac{1}{d_H-1}\log \lvert c_H \rvert=\frac{1}{d_F-1}\log \lvert c_F \rvert
\end{equation}
and consequently 
\begin{equation*}\label{Phi+}
\phi_H^+ \equiv \phi_F^+
\end{equation*}
in $V_R^+$. Similarly,
\begin{equation*}\label{Phi-}
\phi_H^- \equiv \phi_F^-
\end{equation*}
in $V_R^-$. From now on we shall write $\phi^\pm$ for $\phi_H^\pm \equiv \phi_F^\pm$.

\medskip 
\no 
By (\ref{const}),
\[
c_H^{d_F}c_F=c_F^{d_H}c_H\delta_1 \text{ and }{c'_H}^{d_F}{c'_F}={c'_F}^{d_H}{c'_H}\delta_2 
\]
for some $\delta_i$ ($i=1,2$) with $\lvert \delta_i\rvert=1$.

\medskip\no \textit{Step 1: } There exist $\delta_1, \tilde{\delta}_1 \in \mbb C^*$ such that in $\mbb C^2$
\begin{align}\label{Step 1}
  \pi_2 \circ F \circ H=\delta_1 (\pi_2 \circ H \circ F) \text{ and } \pi_2 \circ F^{-1} \circ H^{-1}=\tilde{\delta}_1(\pi_2 \circ H^{-1} \circ F^{-1})
\end{align} 
where $|\delta_1|$ and $\tilde{\delta}_1|=1.$

\medskip\no
By (\ref{Bot1}),  
\begin{equation}\label{FoH}
\phi^+ \circ F \circ H(x,y)=c_F{(\phi^+ \circ H (x,y))}^{d_F}=c_F c_H^{d_F}{(\phi^+(x,y))}^{d_H d_F}
\end{equation}
and similarly,
\begin{equation}\label{HoF}
\phi^+ \circ H \circ F(x,y)=c_H{(\phi^+ \circ F (x,y))}^{d_H}=c_H c_F^{d_H}{(\phi^+(x,y))}^{d_H d_F}.
\end{equation}
Therefore, 
\begin{equation*}\label{equality}
\phi^+ ( F \circ H) =\delta_1 \phi^+ ( H \circ F)
\end{equation*}
on $V_R^+$. 
Since
\begin{equation*}
\phi^+ \circ F \circ H(x,y)\sim { \pi_2 \circ F\circ H}(x,y)  
\end{equation*}
and 
\begin{equation*}
\phi^+ \circ H \circ F(x,y)\sim {\pi_2 \circ H\circ F}(x,y)  
\end{equation*}
as $\lVert(x,y)\rVert\rightarrow \infty$ in $V_R^+$, it follows that for a fixed $x_0 \in \mathbb{C}$, 
\[
{\pi_2 \circ F\circ H}(x_0,y)- \delta_1{(\pi_2 \circ H\circ F)}(x_0,y)\sim 0 \text{ as } \lvert y \rvert \rightarrow \infty.
\]The expression on the left is a polynomial in $y$ and hence
\[
{ \pi_2 \circ F\circ H}(x_0,y)=\delta_1{(\pi_2 \circ H\circ F)}(x_0,y)
\]
for all $y\in \mathbb{C}$. Therefore,
\begin{equation}\label{rel1}
{ \pi_2 \circ F\circ H} \equiv \delta_1{ (\pi_2 \circ H\circ F)}
\end{equation}
Now note that $F\circ H(V_R^+) \cap H\circ F(V_R^+) \neq \phi$, since $[0:1:0]$ is an attracting fixed point of both $F$ and $H$ in $\mbb P^k.$ Let $\cal{U}=F\circ H(V_R^+) \cap H\circ F(V_R^+)$ (an open subset $V_R^+$). From (\ref{FoH}) and (\ref{HoF}) it follows that for $(x,y) \in \cal{U}$
\begin{align*}
\phi^+ \circ F \circ H \circ H^{-1} \circ F^{-1}(x,y)=c_F c_H^{d_F}{(\phi^+(H^{-1} \circ F^{-1}(x,y)))}^{d_H d_F} 
\end{align*}
\begin{align*}
\phi^+ (x,y)=c_F c_H^{d_F}{(\phi^+ \circ H^{-1} \circ F^{-1}(x,y))}^{d_H d_F}
\end{align*}
and similarly,
\begin{equation*}
\phi^+(x,y)=c_H c_F^{d_H}{(\phi^+ \circ F^{-1} \circ H^{-1}(x,y))}^{d_H d_F}.
\end{equation*}
Hence
\[
{(\phi^+ \circ F^{-1}\circ H^{-1} (x,y))}^{d_H d_F} = \eta {(\phi^+ \circ H^{-1} \circ F^{-1} (x,y))}^{d_H d_F}
\]
on $\cal{U}$ with $\vert \eta \vert = 1$. Consequently, there exists $\tilde{\delta}_1$ (an appropriate $d_H d_F-$th root of $\eta$) such that
\[
\phi^+\circ (F^{-1}\circ H^{-1})=\tilde{\delta}_1 \phi^+ \circ (H^{-1}\circ F^{-1})
\]
on $\cal U$. Note that $\vert \tilde{\delta}_1 \vert = 1$. Pick a point $c \in \mbb C^*$ and a sequence $\{y_n\} \in \mbb C$ such that $|y_n| \to \infty.$ The points
\[ [c:y_n:1] \to [0:1:0] \text{ in } \mbb P^2\]
as $n \to \infty,$ i.e., $(c,y_n) \in \cal{U}$ for $n$ sufficiently large. Note that $(F^{-1}\circ H^{-1})(c,y_n)$, and $ (H^{-1}\circ F^{-1})(c,y_n)$, is contained in $V_R^+$ as a consequence of  
$(c,y_n)$ being contained in $\mathcal{U}$. Hence we have,  
\[
{\pi_2 \circ F^{-1}\circ H^{-1}}_1(c,y_n), \;{\pi_2 \circ H^{-1}\circ F^{-1}}(c,y_n)\rightarrow \infty
\]
as $n\rightarrow \infty$. 

\medskip
\no 
Since $\phi^+(x,y)\sim y$ as $\lVert(x,y)\rVert\rightarrow \infty$,  
\begin{equation*}
{\pi_2 \circ F^{-1}\circ H^{-1}}(c,y_n)-\tilde{\delta}_1{(\pi_2 \circ H^{-1}\circ F^{-1})}(c,y_n)\rightarrow 0
\end{equation*} 
as $n\rightarrow \infty$. The expression on the left is a polynomial in $y$ for each fixed $c$ and thus
\begin{equation*}
{\pi_2 \circ F^{-1}\circ H^{-1}}(c,y)=\tilde{\delta}_1{(\pi_2 \circ H^{-1}\circ F^{-1})}(c,y)
\end{equation*}
for all $y\in \mathbb{C}$. Using the same argument as in the previous case, we get
\begin{equation}\label{rel2}
{\pi_2 \circ F^{-1}\circ H^{-1}}\equiv \tilde{\delta}_1{(\pi_2 \circ H^{-1}\circ F^{-1})}.
\end{equation}
Thus \textit{Step 1} is complete from (\ref{rel1}) and (\ref{rel2}).

\medskip\no \textit{Step 2: } There exist $\delta_2, \tilde{\delta}_2 \in \mbb C^*$ such that in $\mbb C^2$
\begin{align}\label{Step 2}
\pi_1 \circ F \circ H=\delta_2 (\pi_1 \circ H \circ F) \text{ and } \pi_1 \circ F^{-1} \circ H^{-1}=\tilde{\delta}_2(\pi_1 \circ H^{-1} \circ F^{-1})
\end{align} 
where $|\delta_2|$ and $\tilde{\delta}_2|=1.$

\medskip\no Using similar idea as in \textit{Step 1}, by interchanging the role of $F$ with $F^{-1}$ and $H$ with $H^{-1}$ and working with the function $\phi^{-1}$ in $V_R^-$, \textit{Step 2} follows.

\medskip\no Thus from (\ref{Step 1}) and (\ref{Step 2}) we have that 
\begin{align}\label{FoH and HoF}
F \circ H=C_1 \circ H \circ F  \text{ and } F^{-1} \circ H^{-1}=C_2 \circ H^{-1} \circ F^{-1} 
\end{align}
where 
$C_1(x,y)=(\delta_2 x, \delta_1 y)$ and $C_2(x,y)=(\tilde{\delta}_2 x, \tilde{\delta}_1 y). $
Now (\ref{FoH and HoF}) gives
\begin{align}\label{main 1}
F \circ H=C_2 \circ H \circ F=H \circ F \circ C_1.
\end{align}

\no\textit{Step 3: } $C_1(x,y)=C_2(x,y)$ or $C_1(x,y)=C_2^{-1}(x,y)$ and $C_1(x,y)=C(x,y)=(\eta x, \eta^{-1} y).$
 
 \medskip
\no Let $A=DH(F(0))DF(0).$ Then by chain rule applied to (\ref{main 1})
\begin{align}\label{3.1}
 D_2 A=A D_1
\end{align}
where $D_2=\text{Diag}(\tilde{\delta}_2,\tilde{\delta}_1)$ and $D_2=\text{Diag}({\delta}_2,{\delta}_1).$ Since $A$ is invertible, one of the diagonal element and one of the off--diagonal element cannot be simultaneously zero. Hence this leads to two generic situation: 
\begin{itemize}
\item[(i)]Either both the diagonal elements should be non--zero. This gives $\ti{\de}_1=\de_1$  and $\ti{\de}_2=\de_2$(from (\ref{3.1})).

\medskip
\item[(ii)] Or both off--diagonal elements should be non--zero. This gives $\de_2=\ti{\de}_1$ and $\ti{\de}_2=\de_1$ (from (\ref{3.1})).
\end{itemize}
Further, as H\'{e}non maps have a constant Jacobian it follows (\ref{main 1}) that 
\[ \text{det }C_1=\text{det }C_2=1.\]Hence \[\de_1=\ti{\de}_1=\eta \text{ and } \de_2=\ti{\de}_2=\eta^{-1} \text{ or }\de_1=\ti{\de}_2=\eta \text{ and } \de_2=\ti{\de}_1=\eta^{-1}.\] Thus the proof.
\end{proof}
\no Let $H$ be a H\'{e}non map of the form(\ref{1.1}), i.e., 
\[ H=H_m \circ \cdots \circ H_1\]
where $H_i$ are H\'{e}non maps of the form (\ref{1.2}). Recall that, there exists $R_H>0$ for which $V^\pm$ can be defined as: 
\begin{align*}
V^+&=V^+_{R_H}=\{ (x,y) \in \mbb C^2 : |y|>\max\{|x|,R\}\} \text{ and }\\ 
V^-&=V^-_{R_H}=\{ (x,y) \in \mbb C^2 : |x|>\max\{|y|,R\}\}
\end{align*} 
such that $H_i(V^+) \subset V^+$, $H_i^{-1}(V^-) \subset V^-$ for every $1 \le i \le n.$ For $R_0>R_H$ consider the region $D^1(R_0) \subset V^-$ and $D^2(R_0) \subset V^+$ defined as:
\[D^1(R_0)=\{(x,y) \in V^-: |y|< {R_H} \text{ and }|x|>R_0\}\] and 
\[D^2(R_0)=\{(x,y) \in V^+:|x|< {R_H} \text{ and }|y|>R_0\}.\]
\begin{figure}[ht!]
\includegraphics[scale=0.4]{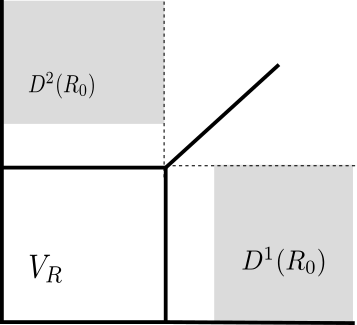}
\caption{The sets $D^1(R_0)$ and $D^2(R_0)$ in $\mbb C^2$}
\end{figure}
\begin{prop}\label{Green-main}
Let $H$ be a H\'{e}non map of the form (\ref{1.1}), i.e.,
\[ H=H_m \circ \cdots \circ H_1\]
where $H_i$'s are of the form (\ref{1.1}) for every $1 \le i \le m.$ Then there exist $R_H>0$ and $R_0 \gg R_H$ such that 
\begin{itemize}
\item[(i)] on $D^2(R_0)$, $G_H^-< G_H^+.$
\item[(ii)] on $D^1(R_0)$, $G_H^+< G_H^-.$
\end{itemize}
\end{prop}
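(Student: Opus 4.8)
The plan is to exploit the logarithmic-growth asymptotics of $G_H^\pm$ together with the invariance relation $G_H^\pm\circ H = d\,G_H^\pm$ to compare the two Green functions on the "escaping strips" $D^1(R_0)$ and $D^2(R_0)$. I will only write down the argument for part (i); part (ii) follows by the symmetric computation with the roles of $H$ and $H^{-1}$ (equivalently $V^+$ and $V^-$) interchanged.

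First I would recall that for $R_H$ sufficiently large there is a constant $M>0$ with $|G_H^+(x,y)-\log|y||\le M$ on $V_{R_H}^+\cup V_{R_H}$ and $|G_H^-(x,y)-\log|x||\le M$ on $V_{R_H}^-\cup V_{R_H}$; these are exactly the logarithmic-growth estimates stated in the introduction. Hence on $D^2(R_0)$, which sits inside $V^+$ and has $|x|<R_H$, $|y|>R_0$, we immediately get $G_H^+(x,y)\ge \log|y|-M\ge \log R_0 - M$, which is large once $R_0$ is large. The point is therefore to get a matching \emph{upper} bound for $G_H^-$ on $D^2(R_0)$. Here $G_H^-$ need not have logarithmic growth in $|x|$ on $D^2(R_0)$ because that region lies in $V^+$, not $V^-$; instead $G_H^-$ is bounded there. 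Concretely, $K_H^-\supset V_{R_H}^+\cup V_{R_H}$ modulo the usual inclusions — more precisely every point of $V^+$ escapes to infinity under forward iteration of $H$ but its backward orbit behaviour is what governs $G_H^-$ — so I would argue that the compact "transition" piece $\{|x|\le R_H,\ |y|=R_0\}$ has $G_H^-$ bounded above by some constant $M'=M'(R_0)$, and then use $G_H^-\circ H = d\,G_H^-$ together with $H(V^+)\subset V^+$ and the precise description of how $H$ moves the $y$-coordinate (it raises $|y|$ to roughly the $d$-th power) to propagate this bound: on the part of $D^2(R_0)$ with $|y|$ very large, $G_H^-$ is controlled by $d^{-1}$ times its value on an earlier point with smaller $|y|$, so $G_H^-$ stays bounded (in fact one gets $G_H^-\le C$ uniformly on all of $D^2(R_0)$ once $R_0$ is fixed, because iterating forward only decreases the normalizing factor). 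Combining, on $D^2(R_0)$ we have $G_H^-\le C$ while $G_H^+\ge \log R_0 - M$, so choosing $R_0$ with $\log R_0 - M > C$ gives $G_H^- < G_H^+$ there, proving (i).

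The main obstacle is the upper bound on $G_H^-$ on $D^2(R_0)$: one must be careful that $D^2(R_0)$ is unbounded (it extends to $|y|=\infty$), so a naive "$G_H^-$ is bounded on a compact set" argument does not apply directly, and one genuinely needs the functional equation $G_H^- \circ H = d G_H^-$ plus the dynamical fact $H(V^+)\subset V^+$ together with quantitative control on $\pi_2\circ H$ (that $H$ increases $|y|$ polynomially of degree $d$) to push the bound from the compact slice $\{|x|\le R_H, |y|=R_0\}$ out along the strip. A clean way to organize this is: show every $(x,y)\in D^2(R_0)$ with $|y|$ large is of the form $H^n(x',y')$ for some $(x',y')$ in the compact slice (or at least has a forward orbit that entered $V^+$ through that slice), whence $G_H^-(x,y) = d^n G_H^-(x',y')$ — wait, that goes the wrong way; rather one wants $(x,y) \mapsto$ its image, so that $G_H^-(x,y) = d^{-1} G_H^-(H(x,y))$, and iterate this to express $G_H^-$ on the far part of the strip as $d^{-n}$ times its value deeper in $V^+$ where it is at most, say, $\max_{V_{2R_0}^+\cap\{|x|\le R_H\}} G_H^-$; a compactness/normal-families argument then bounds the latter, or alternatively one notes $G_H^-$ extends continuously to the part of $\mathbb P^2$ at infinity near $[0:1:0]$ where it vanishes, so it is bounded near that point and hence on a neighborhood covering the tail of $D^2(R_0)$. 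Either route closes the estimate; the symmetric argument on $D^1(R_0)$ with $H^{-1}$ gives (ii), completing the proof.
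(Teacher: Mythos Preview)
Your lower bound for $G_H^+$ on $D^2(R_0)$ is fine, but the proposed upper bound for $G_H^-$ is incorrect: $G_H^-$ is \emph{not} uniformly bounded on $D^2(R_0)$. In the simplest case $H(x,y)=(y,y^2-x)$ one checks directly that for $(x,y)\in D^2(R_0)$ the backward orbit enters $V^-$ with $|\pi_1\circ H^{-1}(x,y)|\sim |y|$ and then grows like $|y|^{2^{n-1}}$, giving $G_H^-(x,y)\sim\tfrac12\log|y|\to\infty$. More generally the paper obtains $G_H^-(x,y)\sim \tfrac{1}{d_1}\log|y|$ on $D^2(R_0)$. The functional-equation step you sketch also goes the wrong way: from $G_H^-\circ H=d^{-1}G_H^-$ one gets $G_H^-(z)=d\,G_H^-(H(z))$, so pushing forward along the strip multiplies by $d$ rather than by $d^{-1}$; iterating produces $G_H^-(z)=d^n G_H^-(H^n(z))$, an indeterminate $\infty\cdot 0$ that gives no uniform bound. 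Likewise $G_H^-$ does not extend continuously to $[0:1:0]$ (it tends to $0$ along $K_H^-$ but to $+\infty$ along $D^2$), so that alternative route fails as well.

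What the paper actually does is compare the two \emph{rates} of logarithmic growth. One backward step of the first H\'enon factor sends $D^2(R_0)$ into $V^-$ with $|\pi_1|$ of order $|y|$ (only linear in $y$, since the $x$-coordinate is bounded there and the polynomial term $p(\cdot)$ in $H_i^{-1}$ is evaluated at a bounded argument). Feeding this into the standard estimates on $V^-$ yields $G_H^-\le \text{const}+\tfrac{1}{d_1}\log|y|$ on $D^2(R_0)$, while $G_H^+\ge \text{const}+\log|y|$; since $d_1\ge 2$ the difference $G_H^+-G_H^-\ge (1-\tfrac{1}{d_1})\log|y|-\text{const}$ is eventually positive. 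So the missing idea is exactly this quantitative backward-orbit computation and the resulting exponent $1/d_1<1$; replacing your boundedness claim by that estimate repairs the argument.
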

\begin{proof}
Let $d_i \ge 2$ be the degree of each $H_i$, $1 \le i \le m$. By assumption $V^\pm=V^\pm_{R_H}$ and $H_i(V^+) \subset V^+$, $H^{-1}_i(V^-) \subset V^-$ for every $1 \le i \le m.$ Further there exist constants $m$ and $M$, such that on $V^+$
\begin{align}\label{mM1}
m|y|^{d_i}<|\pi_2 \circ H_i(x,y)|<M|y|^{d_i} 
\end{align}
and on $V^-$
\begin{align}\label{mM2}
m|x|^{d_i}<|\pi_2 \circ H_i^{-1}(x,y)|<M|x|^{d_i}.
\end{align}
Patching up (\ref{mM1}) and (\ref{mM2}) for every $i$, there exist constants $C_1$ and $C_2$ such that 
\begin{align}\label{C1C2}
\nonumber C_1|x|^d &\le |\pi_2 \circ H(x,y)|  \le C_2|x|^d \text{ on } V^- \text{ and }, \\
 C_1 |y|^d &\le |\pi_2 \circ H(x,y)|  \le C_2|y|^d \text{ on } V^+
\end{align}
where $d=d_m \hdots d_1.$ Suppose $H_1(x,y)=(b_1 y+c_1,p_1(y)-\de_1 x)$ then 
$$H_1^{-1}(x,y)=\Big(\de_1^{-1}(p_1\big(b_1^{-1}(x-c_1)\big)-y),b_1^{-1}(x-c_1)\Big).$$ 
Let $C=\max\{|b_1^{-1}(x-c_1)|: |x|<R_H\}.$ Choose $R_1>\max\{C,R_H\}$ such that for $|y|>R_1$, $$|\de_1^{-1}(p_1\big(b_1^{-1}(x-c_1)\big)-y)|> \max\{C, R_H\}$$ whenever $|x|<R_H$. Clearly, this means that 
\begin{align}\label{H1}
H_1^{-1}(x,y) \in V^-\text{ for }(x,y) \in D^2(R_1).
\end{align}
Thus from the assumptions on $R_H$ and (\ref{H1}) for every $n \ge 1$,  $$H^n(D^2(R_1)) \subset V^+ \text{ and } H^{-n}(D^2(R_1)) \subset V^-.$$ Let
\[ X(x,y)=\pi_1 \circ H_1^{-1}(x,y)=\de_1^{-1}(p_1\big(b_1^{-1}(x-c_1)\big)-y)\]
Now from (\ref{mM1}) and (\ref{mM2}) there exist constants $C_3$ and $C_4$ such that
\begin{align}\label{C3C4}
 C_3 |X(x,y)|^{dd_1^{-1}} \le |\pi_1 \circ H^{-1}(x,y)| \le C_4|X(x,y)|^{dd_1^{-1}}.
\end{align}
for $(x,y) \in D^2(R_1).$  So at the $n-$th iterate from (\ref{C1C2}) we have the following:
\begin{align}\label{eqn8.1}
\nonumber C_1^{1+d+ \cdots +d^{n-2}} |\pi_1 \circ H^{-1}(x,y)|^{d^{n-1}} &\le |\pi_1 \circ H^{-n}(x,y)| \le C_2^{1+d+ \cdots +d^{n-2}}|\pi_1 \circ H^{-1}(x,y)|^{d^{n-1}} \\
C_1^{1+d+ \cdots +d^{n-1}} |y|^{d^{n}} & \le |\pi_2 \circ H^{n}(x,y)| \le C_2^{1+d+ \cdots +d^{n-1}}|y|^{d^{n}}.
\end{align}
From (\ref{C3C4}) and (\ref{eqn8.1}) it follows that
\begin{align}\label{eqn8.2}
\nonumber C_1^{(d^{n-1}-1)/(d-1)}C_3^{d^{n-1}} |X(x,y)|^{d^nd_1^{-1}} &\le |\pi_1 \circ H^{-n}(x,y)| \le C_2^{(d^{n-1}-1)/(d-1)}C_4^{d^{n-1}} |X(x,y)|^{d^nd_1^{-1}} \\
C_1^{(d^n-1)(d-1)^{-1}} |y|^{d^{n}} & \le |\pi_2 \circ H^{n}(x,y)| \le C_2^{(d^n-1)(d-1)^{-1}}|y|^{d^{n}}.
\end{align}
Hence from (\ref{eqn8.2}) the constants $C_1$, $C_2$, $C_3$ and $C_4$ can be appropriately modified to obtain the following:
\begin{align}\label{final}
\nonumber \frac{d^{n-1}}{d-1}\log \ti{C}_1+d^{n}d_1^{-1}\log|X(x,y)|  &\le \log|\pi_1 \circ H^{-n}(x,y)| \le \frac{d^{n-1}}{d-1}\log \ti{C}_2 +{d^{n}d_1^{-1}}\log|X(x,y)|\\
\frac{d^{n-1}}{d-1}\log \ti{C}_3+{d^{n}}\log|y|  &\le \log|\pi_2 \circ H^{n}(x,y)| \le \frac{d^{n-1}}{d-1}\log \ti{C}_4+{d^{n}}\log|y|.
\end{align}
Note that on $V^+$, 
$$G_H^+=\lim_{n \to \infty} \frac{\log|\pi_2 \circ H^n(x,y)|}{d^n}$$ and on $V^-$, 
$$G_H^-=\lim_{n \to \infty} \frac{\log|\pi_1 \circ H^{-n}(x,y)|}{d^n}.$$
Dividing (\ref{final}) with $d^n$ and taking limit $n \to \infty$ it follows that
\begin{align}\label{eqn8.3}
 \frac{1}{d(d-1)} \log \ti{C}_1+\frac{1}{d} \log|X(x,y)| \le G_H^-(x,y) \le \frac{1}{d(d-1)} \log \ti{C}_2+\frac{1}{d_1} \log|X(x,y)|
 \end{align}
and
\begin{align}\label{eqn8.4} 
\frac{1}{d-1} \log \ti{C}_3+ \log|y| \le G_H^+(x,y) \le \frac{1}{d-1} \log \ti{C}_4+ \log|y|.
\end{align}
Since $(x,y) \in D^2(R_1)$, there exists $K>1$ such that
\[ |X(x,y)|=|\de_1^{-1}(p_1\big(b_1^{-1}(x-c_1)\big)-y)|< K |y|.\]
Thus from (\ref{eqn8.3}) and (\ref{eqn8.4}) there exists a real constant $M$ such that 
\[ G_H^--G_H^+< M+\frac{1-d_1}{d_1} \log|y|<0\]
for $|y|$ sufficiently large and $(x,y) \in D^2(R_1).$ Further modifying the choice of $R_1$ to $R_0^+$ (sufficiently large) it follows that $G_H^-< G_H^+$ on $D^2(R_0^+).$

\medskip\no Similarly there exists $R_0^-$ such that $G_H^+<G_H^-$ on $D^1(R_0^-).$ Now choose $R_0=\max\{R_0^+,R_0^-\}$ and this completes the proof.
\end{proof}
\begin{thm}\label{Green}
Let $H$ and $F$ be two H\'{e}non maps such that the Green current associated to $H$ and $F$ are same, i.e., $\mu_H=\mu_F.$ Then
\[ F \circ H=C_\eta \circ H \circ F=H \circ F \circ C_\eta^{\pm}.\] 
\end{thm}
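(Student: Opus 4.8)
The plan is to reduce Theorem~\ref{Green} to Theorem~\ref{relation}. By Theorem~\ref{relation}, it suffices to show that the hypothesis $\mu_H=\mu_F$ forces $F$ to preserve $K_H^\pm$, i.e. $F(K_H^\pm)=K_H^\pm$. The first step is to pass from the Green measure to the Green function: using the uniqueness results for plurisubharmonic functions with logarithmic growth from \cite{BedfordTaylor}, the equality $\mu_H=\mu_F$ of the (normalized) complex Monge--Amp\`ere masses of $G_H$ and $G_F$, together with the fact that both $G_H$ and $G_F$ have the same logarithmic growth at infinity (normalized by the degree) and vanish on a compact set, yields $G_H=G_F$ on all of $\mathbb C^2$. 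In particular $K_H=\{G_H=0\}=\{G_F=0\}=K_F$.

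The second step is to upgrade $K_H=K_F$ to $K_H^\pm=K_F^\pm$. Here I would invoke Proposition~\ref{Green-main}: on the region $D^2(R_0)\subset V^+$ one has $G_H^-<G_H^+$, and symmetrically on $D^1(R_0)\subset V^-$ one has $G_H^+<G_H^-$; the same holds for $F$. Since $G_H=G_F$, comparing $G_H=\max\{G_H^+,G_H^-\}$ with $G_F=\max\{G_F^+,G_F^-\}$ on these two regions shows $G_H^+=G_F^+$ on $D^2(R_0)$ and $G_H^-=G_F^-$ on $D^1(R_0)$. Now $G_H^+$ is pluriharmonic on $\mathbb C^2\sm K_H^+$, equals $\log|y|+O(1)$ on $V_R\cup V_R^+$, and $\mathbb C^2\sm K_H^+=\bigcup_n H^{-n}(V_R^+)$ by (\ref{escape}); so $G_H^+$ is determined on $V_R^+$ (hence, by the functional equation $G_H^+\circ H=dG_H^+$ and analytic continuation, on all of $\mathbb C^2$) by its values on $D^2(R_0)$ via the foliation/Böttcher picture used in the proof of Theorem~\ref{relation}. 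The cleanest route: restrict to a generic complex line $\{x=x_0\}$ meeting $D^2(R_0)$; on this slice $G_H^+$ and $G_F^+$ are harmonic off a compact set, agree near a neighborhood in $D^2(R_0)$, and have the same logarithmic pole, so they agree on the slice; letting $x_0$ vary and using continuity gives $G_H^+=G_F^+$ everywhere, whence $K_H^+=\{G_H^+=0\}=K_F^+$, and symmetrically $K_H^-=K_F^-$.

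With $K_H^\pm=K_F^\pm$ in hand, $F$ trivially preserves $K_H^\pm$ (since $F(K_F^\pm)=K_F^\pm$ by invariance of the non-escaping sets under $F$), so Theorem~\ref{relation} applies verbatim and gives
\[
F\circ H=C_\eta\circ H\circ F=H\circ F\circ C_\eta^{\pm 1}
\]
with $|\eta|=1$, which is exactly the assertion.

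The main obstacle I anticipate is the second step — promoting $K_H=K_F$ (equivalently $G_H=G_F$) to the separate equalities $K_H^\pm=K_F^\pm$, $G_H^\pm=G_F^\pm$. A priori the filled Julia set only sees $\max\{G^+,G^-\}$, and one must genuinely exploit the asymmetry of the dynamics near infinity to disentangle the two pieces; Proposition~\ref{Green-main} is precisely the tool that makes this separation possible, but care is needed to propagate the local equality on $D^2(R_0)$ and $D^1(R_0)$ to a global equality, using pluriharmonicity off $K^\pm$, the escape decomposition (\ref{escape}), and the degree-normalized logarithmic growth to pin down the otherwise free additive/multiplicative constants. Once that is done the remainder is a direct citation of Theorem~\ref{relation}.
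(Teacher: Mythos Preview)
Your proposal follows the same opening moves as the paper: use Bedford--Taylor to pass from $\mu_H=\mu_F$ to $G_H=G_F$, then invoke Proposition~\ref{Green-main} to separate the two pieces and obtain $G_H^+=G_F^+$ on $D^2(R_0)$ and $G_H^-=G_F^-$ on $D^1(R_0)$. Where you diverge from the paper is in what you do next. You try to promote these local equalities to the \emph{global} equalities $G_H^\pm=G_F^\pm$ on all of $\mathbb C^2$ (hence $K_H^\pm=K_F^\pm$), so that you can apply Theorem~\ref{relation} as a black box via $F(K_H^\pm)=F(K_F^\pm)=K_F^\pm=K_H^\pm$. The paper avoids this entirely: after extending from $D^2(R_0)$ to all of $V_R^+$ by real-analyticity (both $G_H^+$ and $G_F^+$ are pluriharmonic on the connected set $V_R^+$, since $V_R^+\cap K_H^+=V_R^+\cap K_F^+=\emptyset$), the paper simply observes that the \emph{proof} of Theorem~\ref{relation} never uses $F(K_H^\pm)=K_H^\pm$ directly --- it only uses that $G_H^\pm=G_F^\pm$ on $V_R^\pm$, because that is where the B\"ottcher coordinates $\phi^\pm$ live and where (\ref{Green1})--(\ref{Green2}) are written. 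So one reruns that argument verbatim and is done. In the paper's logical order, the global statement $K_H^\pm=K_F^\pm$ is proved only later (Theorem~\ref{KHpm}), and it is deduced \emph{from} Theorem~\ref{Green} via the commutation $H^2\circ F^2=F^2\circ H^2$, not the other way around.

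Your detour is not fatal, but the two mechanisms you propose for the global propagation both need repair. The functional-equation route fails as stated: $G_F^+$ satisfies $G_F^+\circ F=d_F\,G_F^+$, not $G_F^+\circ H=d_H\,G_F^+$, so you cannot push the equality off $V_R^+$ using $H$-orbits. The complex-line route can be made to work, but ``same logarithmic pole, so they agree on the slice'' is not enough: on a slice $\{x=x_0\}$ the two restrictions are subharmonic, harmonic off \emph{different} compact sets $K_H^+\cap\{x=x_0\}$ and $K_F^+\cap\{x=x_0\}$, so the identity principle does not apply directly. One has to argue via the Riesz/log-potential representation that agreement on a neighbourhood of $\infty$ forces equality of all moments of the two slice Laplacians, hence equality of the compactly supported measures, hence equality of the potentials --- correct, but considerably more than you wrote. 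The paper's shortcut makes all of this unnecessary.
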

\begin{proof}
Let $G_H$ and $G_F$ denote the Green function associated to $F$ and $H.$ Then 
\[ \mu_H=\Big(\frac{1}{2 \pi}dd^c G_H\Big)^2=\Big(\frac{1}{2 \pi}dd^c G_F\Big)^2=\mu_F.\]
By Theorem \textbf{1} in \cite{BedfordTaylor}, we have that $G_H=G_F.$   

\medskip\no Let
\[ H=H_m \circ \cdots \circ H_1 \text{ and }F=F_n \circ \cdots \circ F_1\]
where $H_j$ and $F_i$ are H\'{e}non maps of the form (\ref{1.2}). There exists $R>0$ such that if $V^\pm=V_R^\pm$ then
$$ H_i(V^+) \subset V^+, \; F_j(V^+) \subset V^+,\; H_i^{-1}(V^-) \subset V^- \text{ and } F_j^{-1}(V^-) \subset V^-$$ for every $1 \le i \le m$ and $1 \le j \le n.$ 

\medskip\no By Proposition \ref{Green-main} there exist $R_H^-$, $R_F^+$ and $R_F^-$ such that $G_H^+<G_H^-$ on $D^1(R_H^-)$, $G_F^+<G_F^-$ on $D^2(R_F^+)$ and $G_F^+<G_F^-$ on $D^1(R_F^-).$ Let $R_0=\max\{R_H^+,R_H^-,R_F^+,R_F^-\}.$ Since $$G_H=\max(G_H^+,G_H^-)\text{ and }G_F=\max(G_F^+,G_F^-),$$
it follows that on $D^2(R_0)$
\begin{align*}
G_F^+=G_F&=G_H=G_H^+,
\end{align*}
and on $D^1(R_0)$
\begin{align*}
G_F^-=G_F&=G_H=G_H^-.
\end{align*}
Also, $G_H^+$ and $G_H^-$ are harmonic on $V_R^+$ and $V_R^-$ respectively, hence
$G_H^+ = G_F^+ \text{ on } V_R^+$, and $ G_H^- = G_F^- \text{ on } V_R^-.$  

\medskip\no Now note that in the proof of Theorem \ref{relation}, we only use the fact that the positive Green functions of $F$ and $H$ agree on $V^+$ and negative Green functions of $F$ and $H$ agree in $V^-.$ Hence by using exactly the same arguments as in the proof of Theorem \ref{relation} it follows that there exists $\eta $ such that
\[  F\circ H=C_\eta \circ H \circ F=H \circ F \circ C_{\eta}^{\pm}\]
where $C_\eta(x,y)=(\eta x, \eta^{-1} y).$
\end{proof}
\begin{rem}\label{equivalent}
The fact that, Green measure is the equillibrium measure supported on the filled Julia set of a H\'{e}non map together with the proof of Theorem \ref{Green} implies that for two H\'{e}non maps $H$ and $F$, $\mu_H=\mu_F$ is equivalent to $G_H=G_F$ is equivalent to $K_H=K_F.$
\end{rem}
\begin{proof}[Proof of Theorem \ref{commute}]
By Theorem \ref{relation}, Theorem \ref{Green} and Remark \ref{equivalent} in either of these cases there exists a $C_\eta$ such that
\[F \circ H=C_\eta \circ  H \circ F=H \circ F \circ C_\eta^\pm.\]
Note that in this case the first diagonal element of $A=DF(H(0))DH(0)$ is always non--zero, forcing $\de_1=\ti{\de}_1.$ Thus
\[F \circ H=C_\eta \circ H \circ F=H \circ F \circ C_\eta.\]
From $ F \circ H=C_\eta \circ H \circ F$, by equating the first coordinate it follows that 
\begin{align*}
\eta\{q(y)-bx\}=p(y)-ax, \text{ i.e., } \eta q(y)=p(y) \text{ and } \eta  b=a.
\end{align*} 
Also, $ F^{-1} \circ H^{-1}=C_\eta \circ H^{-1} \circ F^{-1}$. By comparing the second coordinate we have
\[ b^{-1}(q(x)-y)=\eta^{-1}a^{-1}(p(x)-y),\text{ i.e., } \eta^{-1}a^{-1}=b^{-1} \text{ and }q(x)=p(x) .\]
This is not possible for $\eta \neq 1$, hence the proof.
\end{proof}
%
\section{Proof of Theorems \ref{improved_relation}, \ref{KHpm} and \ref{injective}}\label{3}
\no In this section, we will further improve Theorem \ref{relation}. We say that a H\'{e}non map $H$ can be expressed in \textit{normal form} if
\begin{align}\label{normal form}
H(z)=H_m\circ \cdots \circ H_1(z)
\end{align}
where each $H_i(x,y)=(y,p_i(y)-\delta_i x)$ such that degree of $p_i \ge 2$ and $\delta_i \neq 0.$
\begin{lem}\label{normal theorem}
Given a H\'{e}non map of the form (\ref{1.1}), say $H$, $$H^2=H \circ H$$ can be expressed in normal form.
\end{lem}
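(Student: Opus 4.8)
The plan is to reduce the whole statement to three one--line commutation identities relating a \emph{normal--form factor} $G(x,y)=(y,q(y)-\epsilon x)$ (with $\deg q\ge 2$, $\epsilon\ne 0$) to two families of affine maps: $\sigma(x,y)=(\alpha x+\beta,y)$, which alters only the first coordinate, and $\tau(x,y)=(x,\lambda y+\nu)$, which alters only the second coordinate (always with $\alpha,\lambda\ne 0$). The starting observation is the trivial factorization of a form--(\ref{1.2}) map: $H_j(x,y)=(b_jy+c_j,p_j(y)-\delta_j x)=\sigma_j\circ\tilde H_j$, where $\tilde H_j(x,y)=(y,p_j(y)-\delta_j x)$ is a normal--form factor and $\sigma_j(x,y)=(b_jx+c_j,y)$. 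Hence $H=\sigma_m\circ\tilde H_m\circ\sigma_{m-1}\circ\tilde H_{m-1}\circ\cdots\circ\sigma_1\circ\tilde H_1$, an alternating word in the $\sigma_j$'s and normal--form factors.

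Next I would record (each a direct computation, with the degree of the underlying polynomial unchanged and its leading/nonvanishing data preserved, so one really lands back among normal--form factors): (a) $G\circ\sigma$ is again a normal--form factor, because $G$'s first output coordinate is the untouched second input coordinate; (b) $\sigma\circ G=G^{\#}\circ\tau$ for a normal--form factor $G^{\#}$ and some $\tau$ of the second type, obtained by pulling the affine reparametrization of $y$ out to the right and absorbing it into the polynomial of $G$; (c) $\tau\circ G$ is again a normal--form factor.

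Using (a) one slides each $\sigma_j$ with $j<m$ rightward into $\tilde H_{j+1}$, which swallows it; this leaves exactly one stray affine map on the outside, so $H=\sigma_m\circ N$ with $N$ a composition of $m$ normal--form factors. Squaring gives $H^2=\sigma_m\circ N\circ\sigma_m\circ N$; applying (a) once more lets the inner $\sigma_m$ be absorbed by the rightmost factor of the left copy of $N$, so $H^2=\sigma_m\circ\mathcal M$ where $\mathcal M=\Gamma_{2m}\circ\cdots\circ\Gamma_1$ is a composition of $2m$ normal--form factors. Finally apply (b) to $\sigma_m\circ\Gamma_{2m}$ to replace $\sigma_m$ by a $\tau$ wedged between $\Gamma_{2m}^{\#}$ and $\Gamma_{2m-1}$, then apply (c) to absorb that $\tau$ into $\Gamma_{2m-1}$; since $2m\ge 2$ there genuinely is a factor $\Gamma_{2m-1}$ available, and what remains is a composition of $2m$ normal--form factors equal to $H^2$.

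The one delicate point, and the reason the statement concerns $H^2$ rather than $H$, is exactly this last step: normalizing a general $H$ always produces a single stray affine map, which can be cleared only when at least one normal--form factor lies to its right after the rule--(b) move. A lone factor of the form (\ref{1.2}) with $(b_1,c_1)\ne(1,0)$ is typically not itself a composition of normal--form factors (its first coordinate is linear in $y$, not of degree $\ge 2$), but passing to the square always supplies the extra factor needed. The only thing to keep straight is the direction of each commutation — an affine map in the first coordinate is harmless to the right of a normal--form factor and becomes an affine map in the second coordinate when pushed to its left, while an affine map in the second coordinate is harmless to the left of a normal--form factor; everything else is bookkeeping.
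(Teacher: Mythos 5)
Your proof is correct and rests on the same mechanism as the paper's: absorb the affine data $(b_j,c_j)$ of each factor into an adjacent degree-$\ge 2$ factor, with the evenness of the $2m$ factors of $H^2$ guaranteeing that no stray affine map survives at the end. The paper packages your three commutation rules (a)--(c) into a single explicit identity $H_{i+1}\circ H_i=\tilde{H}_{i+1}\circ\tilde{H}_i$ rewriting each consecutive pair of form-(\ref{1.2}) factors as a pair of normal-form factors, but the underlying computation is the same.
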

\begin{proof}
By assumption, $H=H_m \circ \cdots \circ H_1$ where $H_i$'s are H\'{e}non map of the form (\ref{1.2}), i.e.,
\[ H_i(x,y)=(b_i y+c_i,p_i(y)-\de_i x)\]
such that $\de_ib_i \neq 0$ and $d_i$, the degree of $p_i$ is greater than or equal to $2.$ Then 
\[ H^2=H_m \circ \cdots \circ H_1 \circ H_m \circ \cdots \circ H_1\] and $H^2$ is an even composition of H\'{e}non maps of the form (\ref{1.2}). Now
\[ H_{i+1} \circ H_{i}(x,y)=(b_{i+1}(p_i(y)-\de_i x)+c_{i+1}, p_{i+1}(p_i(y)-\de_i x))-\de_{i+1}b_i y-\de_{i+1}c_i).\]
Define $\ti{H}_i(x,y)=(y,\ti{p}_i(y)-\ti{\de}_i x)$ and $\ti{H}_{i+1}(x,y)=(y,\ti{p}_{i+1}(y)-\ti{\de}_{i+1} x)$ where
$$\ti{p}_i(x)=b_{i+1}p_i(y)+c_{i+1}, \; \ti{\de}_i=b_{i+1}\de_i$$
and
$$\ti{p}_{i+1}(x)=p_{i+1}(b_{i+1}^{-1}(x-c_{i+1}))-\de_{i+1}c_i, \; \ti{\de}_{i+1}=\de_{i+1}b_i.$$ Note that
\[ \ti{H}_{i+1} \circ \ti{H}_i(x,y)=H_{i+1} \circ H_i(x,y)\]
and hence the proof.
\end{proof}
\begin{lem}\label{non-normal theorem}
Let $H$ be a H\'{e}non map such that
\[ H=H_m \circ \cdots \circ H_1\]
where $m \ge 2$ and $H_i(x,y)=(b_i y,p_i(y)-\de_i x)$ for every $1 \le i \le m.$ Then $H$ can be expressed in normal form as a $m-$composition. 
\end{lem}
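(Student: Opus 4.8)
\textbf{Proposal for the proof of Lemma \ref{non-normal theorem}.}

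The plan is to reduce to the previous lemma by carefully tracking the extra constants $c_i$ that were present there but are absent here, and then use a finer bookkeeping that does not double the number of factors. First I would write out a single adjacent composition
\[
H_{i+1}\circ H_i(x,y)=\bigl(b_{i+1}(p_i(y)-\de_i x),\; p_{i+1}(b_{i+1}(p_i(y)-\de_i x))-\de_{i+1}b_i y\bigr),
\]
and observe that the first coordinate, after the substitution coming from the H\'enon structure, is an affine function of $p_i(y)-\de_i x$; in particular each $H_i$ with $b_i\ne 0$ and no additive constant is linearly conjugate (on the source) to a map of the form $(x,y)\mapsto(y, \ti p_i(y)-\ti\de_i x)$. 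The mechanism is exactly that of Lemma \ref{normal theorem}: absorb the scalar $b_{i+1}$ into $p_i$ and into $\de_i$, pushing the conjugating scalar one slot to the right, and continue. Since there are no $c_i$'s to begin with, the conjugating maps are pure scalings $L_i(x,y)=(x,\lambda_i y)$ (or $(\lambda_i x,\lambda_i y)$), and the telescoping
\[
H=H_m\circ\cdots\circ H_1 = (L_m^{-1}\ti H_m L_m)\circ\cdots\circ(L_1^{-1}\ti H_1 L_1)
\]
collapses provided the adjacent $L$'s cancel. The point where $m\ge 2$ is used: with at least two factors one has room to place the residual scaling, and the composition $\ti H_m\circ\cdots\circ\ti H_1$ (after the outermost and innermost scalings are either absorbed or seen to be linear parts that can be folded into the first/last polynomial) is genuinely in normal form (\ref{normal form}).

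The key steps, in order, are: (1) record the normalization identity for a single pair $H_{i+1}\circ H_i$, exhibiting $\ti p_i,\ti\de_i,\ti p_{i+1},\ti\de_{i+1}$ as in Lemma \ref{normal theorem} but with all $c$'s set to zero, so that the correction is a diagonal linear map rather than an affine one; (2) induct on $m$: the outer $m-1$ factors $H_m\circ\cdots\circ H_2$ form a H\'enon map of the same type, hence by induction equal $\hat H_{m-1}\circ\cdots\circ\hat H_1$ in normal form up to a diagonal linear map $L$ on the far right, i.e. $H_m\circ\cdots\circ H_2 = (\hat H_{m-1}\circ\cdots\circ\hat H_1)\circ L$; (3) compose with $H_1$ on the right, note $L\circ H_1$ is again a map of the form $(x,y)\mapsto(b_1' y, p_1'(y)-\de_1' x)$, and absorb this single leftover scaling into $\hat H_1$ by the one-pair identity from step (1); (4) check the degree and non-degeneracy conditions ($\deg \ti p_i\ge 2$, $\ti\de_i\ne 0$) are preserved throughout, which is immediate since we only scale variables and polynomials by nonzero constants.

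The main obstacle is step (3): after the inductive step one is left with a diagonal map sandwiched between $\hat H_1$ and $H_1$, and one must verify that $\hat H_1 \circ L \circ H_1$ — or however the leftover scaling ends up positioned — can be rewritten as $\hat H_1' \circ H_1'$ with both in normal form, rather than producing a non-removable linear factor. This is where the hypothesis $m\ge 2$ is essential (for $m=1$ a map $(x,y)\mapsto(b_1 y, p_1(y)-\de_1 x)$ with $b_1\ne 1$ is not literally of the form (\ref{normal form}), and there is no neighbor to absorb the scaling into), and one must be slightly careful that the conjugating scalars from the two ends of the telescoping product genuinely match up; I expect this to come down to the same explicit computation as in Lemma \ref{normal theorem}, now with $c_{i+1}=c_i=0$, so the conjugation is by $(x,y)\mapsto(x,\lambda y)$ and the bookkeeping is cleaner. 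Everything else is routine substitution.
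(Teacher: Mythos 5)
Your argument is correct and rests on the same mechanism as the paper's own proof: induction on $m$ combined with the pairwise absorption identity of Lemma \ref{normal theorem} (with all constants $c_i$ set to zero), so that each leftover diagonal scaling is pushed into an adjacent factor, and your attention to step (3) and to the role of $m\ge 2$ is exactly the right point to worry about. The only cosmetic difference is that you peel off the innermost factor and carry a residual diagonal map in a strengthened induction hypothesis, whereas the paper peels off the outermost factor by composing with $\tilde{H}_{m+1}^{-1}$ and reduces directly to the $m$-fold case, which keeps the induction hypothesis unchanged.
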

\begin{proof}
We will prove by induction on $m.$ For $m=2$, i.e., $H=H_2 \circ H_1$ the proof is same as Lemma \ref{normal theorem}, by taking $c_1=c_2=0.$ Assume that the statement is true for some $m \ge 2$, and prove for $m+1$, i.e., 
\[ H=H_{m+1} \circ \cdots \circ H_1.\]
consider $\ti{H}_{m+1}(x,y)=(y, p_{m+1}(y)-\de_{m+1} x)$ and $\ti{H}_m(x,y)=(b_{m}y, b_{m+1} p_m(y)-b_{m+1} \de_m x)$ then
\[ \ti{H}_{m+1}^{-1}\circ H= \ti{H}_m \circ H_{m-1}\circ \cdots \circ H_1.\]
By \textit{induction hypothesis}, $\ti{H}_{m+1}^{-1}\circ H$ can be expressed in normal form and thus the proof.
\end{proof}
\begin{lem}\label{0-fixed}
Let $H$ be a H\'{e}non map of the form (\ref{1.1}) which can be expressed in the normal form as:
\[ H=H_m \circ \cdots\circ H_1\]
where $H_i(x,y)=(y,p_i(y)-\delta_i x)$ and $p_i(0)=0$ for every $1 \le i \le m.$ Further if there exists $C_\eta$, such that
\[ C_\eta \circ H=H \circ C_\eta \text{ or }C_\eta \circ H=H \circ C_\eta^{-1}\]
for some $|\eta|=1.$ Then $\eta p_i(\eta y)=p_i(y)$ for every $1 \le i \le m.$
\end{lem}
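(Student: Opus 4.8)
\textbf{Proof proposal for Lemma \ref{0-fixed}.}

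The plan is to exploit the recursive structure of the normal form composition together with the action of $C_\eta$ on the projections. Since $C_\eta(x,y)=(\eta x,\eta^{-1}y)$ and each $H_i(x,y)=(y,p_i(y)-\delta_i x)$, a direct computation gives $H_i\circ C_\eta(x,y)=(\eta^{-1}y,\,p_i(\eta^{-1}y)-\eta\delta_i x)$, while $C_\eta^{-1}\circ H_i(x,y)=(\eta^{-1}y,\,\eta(p_i(y)-\delta_i x))$. Comparing the second coordinates, one sees that the single factor $H_i$ is conjugated by $C_\eta$ into another normal-form map $H_i'(x,y)=(y,\tilde p_i(y)-\tilde\delta_i x)$ precisely when the linear parameter $\eta$ on the left is absorbed; the key observation is that conjugation by $C_\eta$ preserves the class of normal-form compositions and permutes the factors in a controlled way. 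So the first step is to set up the identity $C_\eta\circ H\circ C_\eta^{-1}=H$ (resp. $C_\eta^2$) and expand both sides as $m$-fold compositions of normal-form factors.

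Next I would invoke the uniqueness of the normal form. By Lemma \ref{normal theorem} and the surrounding discussion, a H\'enon map has an essentially unique expression in normal form up to the linear ambiguities recorded there; in particular the individual degrees $d_i$ and, after normalizing, the polynomials $p_i$ are determined. Writing $C_\eta\circ H\circ C_\eta^{-1}=(C_\eta\circ H_m\circ C_\eta^{-1})\circ\cdots\circ(C_\eta\circ H_1\circ C_\eta^{-1})$, each conjugated factor $C_\eta\circ H_i\circ C_\eta^{-1}$ is again of the form $(x,y)\mapsto(y,q_i(y)-\delta_i'x)$ with $q_i(y)=\eta^{\pm1}p_i(\eta^{\mp1}y)$ (the signs depending on which of the two hypotheses holds, and on a bookkeeping of how the outer $C_\eta$'s telescope through the composition). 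Matching this conjugated normal form against the original normal form of $H$ factor by factor — using that both expansions have the same length $m$ and the uniqueness statement forces the $i$-th factors to agree up to the allowed linear scaling — yields $q_i=p_i$, i.e. $\eta p_i(\eta y)=p_i(y)$ for each $i$, once the scaling is pinned down by $p_i(0)=0$ and $|\eta|=1$.

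The main obstacle, and the place where the hypothesis $p_i(0)=0$ does real work, is controlling the linear ambiguity in the normal-form decomposition: \emph{a priori} $C_\eta\circ H\circ C_\eta^{-1}$ could equal $H$ as a map while its factorwise normal form differs from that of $H$ by a nontrivial telescoping string of linear maps $(x,y)\mapsto(\lambda_i y,\mu_i y+\dots)$. Ruling this out requires showing the telescoping collapses — that the linear cocycle relating the two factorizations is forced to be the constant $C_\eta$ on each factor. I expect to handle this by tracking the top-degree coefficient of $p_i$ (which fixes the multiplicative scaling) and the condition $p_i(0)=0$ (which kills the additive constant that would otherwise appear when one commutes a factor past $C_\eta$), so that the only surviving freedom is exactly the scaling $p_i(y)\mapsto\eta p_i(\eta y)$ claimed. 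Once the decomposition is rigid in this sense, the conclusion $\eta p_i(\eta y)=p_i(y)$ for all $i$ is immediate, and the case $C_\eta\circ H=H\circ C_\eta^{-1}$ is handled by the same argument after replacing the relevant $\eta$'s by their inverses in the bookkeeping.
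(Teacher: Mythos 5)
There is a genuine gap here, and the one concrete structural claim you commit to is false. You assert that each conjugated factor $C_\eta\circ H_i\circ C_\eta^{-1}$ is again of the normal form $(x,y)\mapsto(y,q_i(y)-\delta_i'x)$. Computing it out, $C_\eta\circ H_i\circ C_\eta^{-1}(x,y)=\bigl(\eta^2 y,\ \eta^{-1}p_i(\eta y)-\delta_i\eta^{-2}x\bigr)$: the first coordinate is $\eta^2y$, not $y$, so straight conjugation by $C_\eta$ does \emph{not} preserve the normal form unless $\eta^2=1$. The correct per-factor relation (Remark \ref{0-fixed rem}) is the twisted one $C_\eta\circ H_i=H_i\circ C_\eta^{-1}$, i.e.\ $C_\eta\circ H_i\circ C_\eta=H_i$; the intermediate linear maps in the cocycle alternate between $C_\eta$ and $C_\eta^{-1}$ as one moves through the composition, rather than being ``the constant $C_\eta$ on each factor'' as you claim. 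Your hedge $q_i(y)=\eta^{\pm1}p_i(\eta^{\mp1}y)$ ``depending on bookkeeping'' sits exactly on the point at issue: the constraints $p_i(\eta y)=\eta p_i(y)$ and $p_i(\eta y)=\eta^{-1}p_i(y)$ are genuinely different (writing $p_i(y)=\sum a_ky^k$, the first kills $a_k$ unless $\eta^{k-1}=1$, the second unless $\eta^{k+1}=1$), and only the second is the lemma's conclusion. Leaving the sign to unspecified bookkeeping leaves the statement unproved.

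The second, larger gap is that the step you flag as the ``main obstacle'' --- showing the telescoping string of linear maps collapses to the alternating $C_{\eta^{\pm1}}$ with no translation parts --- is exactly where the entire content of the lemma lies, and you only say you ``expect to handle'' it. The paper does this directly, without any appeal to uniqueness of the factorization: move $H_m$ across to get $(H_m^{-1}\circ C_\eta\circ H_m)\circ H_{m-1}\circ\cdots\circ H_1=H_{m-1}\circ\cdots\circ H_1\circ C_\eta^{\pm1}$; a degree count in $y$ forces $A_m=H_m^{-1}\circ C_\eta\circ H_m$ to be affine; its explicit first coordinate is $\delta_m^{-1}\bigl(p_m(\eta y)-\eta^{-1}p_m(y)\bigr)+\eta^{-1}x$, so $p_m(\eta y)-\eta^{-1}p_m(y)$ must be a constant, which $p_m(0)=0$ forces to be $0$; hence $\eta p_m(\eta y)=p_m(y)$, $A_m=C_\eta^{-1}$, and one inducts on the remaining factors with $C_\eta$ replaced by $C_\eta^{-1}$. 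If you insist on routing through Friedland--Milnor uniqueness, you must still prove --- not assume --- that the inserted affine maps are diagonal and equal to the alternating $C_{\eta^{\pm1}}$, and that argument is essentially the same factor-by-factor computation, so the detour buys nothing. As written, the proposal does not establish the lemma.
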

\begin{proof}
The above statement is always true for $\eta=1.$ So assume $\eta \neq 1$. Then
\begin{align}\label{modification}
 (H_m^{-1} C_\eta H_m) \circ H_{m-1} \circ \cdots \circ H_1=H_{m-1} \circ \cdots \circ H_1 \circ C_\eta^{\pm}.
 \end{align}
Let $A_m=H_m^{-1} C_\eta H_m.$ Note that $A_m$ should be a linear map of the form $(ax+c,dx+ey+f)$, otherwise there will be an inconsistency in the highest degree of $y$ on both the sides of (\ref{modification}). Since 
\[ H_m^{-1}(x,y)=(\de_m^{-1}(p_m(x)-y),x),\] by computing $A_m$ we have that 
\[ A_m(x,y)=(\de_m^{-1}(p_m(\eta y)-\eta^{-1}p_m(y))+\eta^{-1} x, \eta y).\]
Thus $\de_m^{-1}(p_m(\eta y)-\eta^{-1}p_m(y))$ should be a constant (say $C_m$). Since $p_m(0)=0$, it follows that $C_m=0$, i.e., $\eta p_m(\eta y)=p_m(y).$ Also, (\ref{modification}) reduces to
\[ C_\eta^{-1} \circ H_{m-1} \circ \cdots \circ H_1=H_{m-1} \circ \cdots \circ H_1 \circ C_\eta^{\pm}.\]
Now by inductive argument the proof follows.
%
\end{proof}
\begin{rem}\label{0-fixed rem}
Note that from the proof of Lemma \ref{0-fixed}, it follows that, for every $1 \le i \le m$
\[ C_\eta \circ H_i=H_i \circ C_\eta^{-1}.\]
\end{rem}
\begin{lem}\label{0-fixed 2}
Let $H$ be a H\'{e}non map such that $H(0)=0.$ Then $H$ can be represented as:
\[ H=H_{m} \circ \cdots \circ H_1\]
where $H_i(x,y)=(b_i y,p_i(y)-\delta_i x)$ and $p_i(0)=0$ for every $1 \le i \le m.$
\end{lem}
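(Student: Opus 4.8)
The plan is to reduce to the already-established normal-form lemmas by peeling off the constants $c_j$ one factor at a time, exploiting that $H$ fixes the origin. Start from the given presentation $H=H_m\circ\cdots\circ H_1$ with $H_j(x,y)=(b_jy+c_j,\,p_j(y)-\delta_j x)$, $b_j\delta_j\neq 0$, $\deg p_j\ge 2$. First I would record that for any affine map $L(x,y)=(x,y+\beta)$ (a vertical translation) and any $H_j$ of the form \eqref{1.2}, the conjugate $L^{-1}\circ H_j$ and $H_j\circ L$ are again of the form \eqref{1.2}: absorbing a translation on either side only shifts the constant $c_j$ and the polynomial $p_j$ by constants, and does not change $b_j$, $\delta_j$, or $\deg p_j$. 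Likewise $H_j$ composed with a map of the form $(x,y)\mapsto(x+\gamma,y)$ stays of the form \eqref{1.2}. The idea is to use such elementary "commutation through a translation'' moves, together with the fact that $H(0)=0$, to successively kill all the $c_j$'s.

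The key steps, in order. Step 1: since $H(0)=0$, consider the orbit of $0$ under the partial compositions, i.e. set $q_0=(0,0)$ and $q_j=H_j(q_{j-1})$ for $1\le j\le m$, so $q_m=H(0)=q_0=(0,0)$. Conjugating the whole map by nothing, I instead insert telescoping translations: write $T_j$ for the translation sending $q_j$ to $0$, so $T_0=T_m=\mathrm{id}$. Then
\[
H=(T_m\circ H_m\circ T_{m-1}^{-1})\circ(T_{m-1}\circ H_{m-1}\circ T_{m-2}^{-1})\circ\cdots\circ(T_1\circ H_1\circ T_0^{-1}),
\]
and each factor $\widehat H_j:=T_j\circ H_j\circ T_{j-1}^{-1}$ fixes the origin by construction. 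Step 2: check that each $\widehat H_j$ is still of the form $(x,y)\mapsto(b_jy+c_j',\,\widetilde p_j(y)-\delta_j x)$ — the linear parts $b_j,\delta_j$ and $\deg p_j$ are untouched since $T_j$ are translations — and now impose $\widehat H_j(0)=0$. Writing out $\widehat H_j(0,0)=0$ forces the new $x$-constant $c_j'=0$ and the new polynomial $\widetilde p_j$ to satisfy $\widetilde p_j(0)=0$. Hence each $\widehat H_j(x,y)=(b_jy,\,\widetilde p_j(y)-\delta_j x)$ with $\widetilde p_j(0)=0$, which is exactly the desired form. Step 3: $m$ might have dropped to $1$ only if some $\deg p_j$ could collapse, but it cannot ($\deg\widetilde p_j=\deg p_j\ge2$), so the number of factors is preserved; if the lemma's conclusion is meant for $m\ge 2$ the induction base from Lemma \ref{non-normal theorem} applies, and if $m=1$ the claim is immediate from $H(0)=0$.

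I expect the only mildly delicate point — the "main obstacle,'' though it is really bookkeeping — to be Step 2: verifying that after inserting the telescoping translations the middle factors genuinely retain the shape \eqref{1.2} with the same $b_j,\delta_j$ and that the origin-fixing condition propagates correctly through the composition (one must be careful that $T_j$ is chosen as the translation killing $q_j=H_j(q_{j-1})$, not $q_{j-1}$, so that the domain and range translations match up across consecutive factors). Once the telescoping is set up correctly this is a one-line computation per factor. No deep input is needed here beyond the normal-form manipulations already used in Lemma \ref{normal theorem} and Lemma \ref{non-normal theorem}; indeed, combined with Lemma \ref{non-normal theorem} this immediately upgrades to a normal-form representation of $H$ itself when $m\ge2$, which is presumably how it will be used in the proof of Theorem \ref{improved_relation}.
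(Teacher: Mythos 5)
Your proof is correct, and it reaches the conclusion by a slightly different route than the paper. The core computation is the same in both arguments: composing a factor of the form (\ref{1.2}) with translations on either side preserves that shape (leaving $b_j$, $\delta_j$ and $\deg p_j$ untouched), and once such a factor fixes the origin, evaluating at $(0,0)$ forces the constant in the first coordinate and the value $p_j(0)$ to vanish. The difference is in the organization. The paper proceeds by induction on the number of factors $m$: it first quotes the Friedland--Milnor uniqueness of the decomposition, normalizes the first factor $\tilde H_1$ so that it fixes $0$, observes that $\tilde H_1\circ H_1^{-1}$ is a translation $A$, absorbs $A$ into $\tilde H_2$, and applies the induction hypothesis to $H\circ H_1^{-1}$, which still fixes $0$ and has $m-1$ factors. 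Your telescoping along the orbit $q_0=0$, $q_j=H_j(q_{j-1})$, $q_m=H(0)=0$ handles all factors in a single pass and makes it transparent exactly where the hypothesis $H(0)=0$ enters (it gives $T_m=T_0=\mathrm{id}$, so the inserted translations cancel in the composition); it also dispenses with the appeal to the uniqueness result in \cite{FM}, which is not actually needed for this existence statement, and with the bookkeeping of redefining $\tilde H_2$. Your Step 3 is, as you say, a non-issue: the telescoping visibly preserves the number of factors, and the degrees $\deg\widetilde p_j=\deg p_j\ge 2$ are unchanged. Both arguments are complete; yours is the more direct of the two.
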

\begin{proof}
Recall from Corollary 2.3 in \cite{FM} the expression of a H\'{e}non map is unique upto composition by linear elementary maps. In particular, if
\[ H=H_m \circ \cdots \circ H_1=\ti{H}_{\ti{m}} \circ \cdots \circ \ti{H}_1\] where $H_i$ and $\ti{H}_i$ are H\'{e}non maps of the form (\ref{1.2}).
Then $m=\ti{m}$ and $H_i=s_i \circ \ti{H}_i$ or $H_i=\ti{H}_i \circ s_i$ where $s_i(x,y)=(ax+b,cy+d ).$ Hence, we can use induction on the number of H\'{e}non maps (here it is $m$) of the form (\ref{1.2}) composed to obtain the given H\'{e}non map.

\medskip\no
\textit{Initial case: }For $m=1$, since $H(0)=0$, it follows that $H_1=\ti{H}_1$ and the statement is true.

\medskip\no\textit{Induction Statement: }Suppose the statement is true for $m-1$, i.e., if $H$ is a H\'{e}non map such that $H(0)=0$ obtained by $(m-1)-$composition in normal form. Then $H$ can be expressed as:
\[ H=H_{m-1} \circ \cdots \circ H_1\]
where $H_i(x,y)=(b_i y,p_i(y)-\delta_i x)$ and $p_i(0)=0$ for every $1 \le i \le m-1.$

\medskip\no \textit{General case: } Thus
\[ H=\ti{H}_{m} \circ \cdots \circ \ti{H}_1\]
where $\ti{H}_i(x,y)=(b_i y+c_i,\ti{p}_i(y)-\delta_i x)$  for every $1 \le i \le m.$ Define, $p_1(x)=\ti{p}_1(x)-\ti{p}(0)$ and let 
$$H_1(x,y)=(b_1 y,p_1(y)-\de_1 x).$$ Now
\[ \ti{H}=H \circ H_1^{-1}=\ti{H}_{m} \circ \cdots \circ \ti{H}_2 \circ A \text{ where }A(x,y)=(x+c,y+d).\] Now redefine $\ti{H}_2=\ti{H}_2 \circ A.$ Then $\ti{H}$ is a H\'{e}non map such that $\ti{H}(0)=0$ obtained by \\ $(m-1)-$composition in normal form. By \textit{Induction hypothesis}, $\ti{H}$ can be expressed as:
\[ \ti{H}=H_{m} \circ \cdots \circ H_2\]
where $H_i(x,y)=(b_i y,p_i(y)-\delta_i x)$ and $p_i(0)=0$ for every $2 \le i \le m.$ Thus the proof.
\end{proof}
\begin{rem}\label{independent of bj}
Suppose $H$ is a H\'{e}non map such that $H(0)=0$ and 
\[ H=H_m \circ \cdots \circ H_1\]
where $m \ge 2$ and $H_i(x,y)=(b_i y, p_i(y)-\de_i x).$ From Lemma \ref{non-normal theorem} and Lemma \ref{0-fixed 2}, $H$ can be expressed in the normal form, i.e.,
\[ H=\ti{H}_m \circ \cdots \circ \ti{H}_1\]
where $\ti{H}_i(x,y)=(y, \ti{p}_i(y)-\ti{\de}_i x)$ for $1 \le i \le m.$ Note that for every $1 \le i \le m$, there exist non-zero constants $m_i$ and $M_i$ such that
\[ \ti{p}_i(y)=m_i p_i(M_i y).\] Hence, for some $|\eta|=1$, if $\eta \ti{p}_i(\eta y)=$ then $\eta p_i(\eta y)=p_i(y).$
\end{rem}
\no Now we can complete the proof of Theorem \ref{improved_relation}.
\begin{proof}[Proof of Theorem \ref{improved_relation}]
From Theorem \textbf{1.1} of the \cite{RigidityPaper}, we know that $F$ should be a polynomial map, in particular either $F$ or $F^{-1}$ is a H\'{e}non map of the form (\ref{1.1}). 
%

\medskip\no By Theorem \ref{relation}, it follows that there exist $\eta_1$ and $\eta_2$ with $|\eta_i|=1$ for $i=1,2$ such that 
\begin{align}\label{eta_rel_1}
 F \circ H=C_{\eta_1} \circ H \circ F=  H \circ F\circ C_{\eta_1} \text{ or }F \circ H=C_{\eta_1} \circ H \circ F=  H \circ F\circ C_{\eta_1}^{-1}
 \end{align}
 and
 \begin{align}\label{eta_rel_2}
 F^2 \circ H^2=C_{\eta_2} \circ H^2 \circ F^2=  H^2 \circ F^2\circ C_{\eta_2} \text{ or }F^2 \circ H^2=C_{\eta_2} \circ H^2 \circ F^2=  H^2 \circ F^2\circ C_{\eta_2}^{-1}
 \end{align}
where $C_{\eta_i}(x,y)=(\eta_i x,\eta^{-1}_i y).$

\medskip\no\textit{Case 1: } If $H(0)=0.$

\medskip\no \textit{Subcase 1: }If $F(0) \neq 0$ then from (\ref{eta_rel_1})
\[ C_{\eta_1} \circ H \circ F(0)=H \circ F(0) \neq 0.\] Thus proving $C_{\eta_1}=\text{Identity}$ and $H^2 \circ F^2=F^2 \circ H^2.$

\medskip\no\textit{Subcase 2: } If $F(0)=0$ then by Lemma \ref{0-fixed 2}, the expression of $F$ and $H$ can be modified such that
\[ F=F_{n} \circ \cdots \circ F_{1} \text{ and } H=H_{m} \circ \cdots \circ H_1\]
where $F_j(x,y)=(\beta_j y,q_j(y)-\rho_j x)$ and $H_i(x,y)=(b_iy,p_i(y)-\delta_i x)$ with $p_i(0)=0$ and $q_j(0)=0$ and $b_i\delta_i, \beta_j\rho_j\neq 0$ for every $1 \le i \le m$ and $1 \le j \le n.$

\medskip\no From (\ref{eta_rel_1}) there exists $C_{\eta_1}$ such that
\[ C_{\eta_1} \circ H \circ F=H \circ F \circ C_{\eta_1}^{\pm}.\]
From Lemma \ref{non-normal theorem}, $H \circ F$ can be expressed in the normal form. Further from Remark \ref{0-fixed rem} and \ref{independent of bj}, if any $p_i$'s or $q_j$'s have a linear term then
\[ F \circ H=\pm H \circ F.\] If $F \circ H =H \circ F$ then clearly $F^2 \circ H^2 =H^2 \circ F^2.$ Otherwise, if $F \circ H =-H \circ F$, then $$F \circ H(x,y)=H \circ F(-x,-y)=-H \circ F(x,y).$$
Now 
\begin{align*}
 F \circ H \circ F \circ H &=H \circ F \circ H\circ F, \\
 F \circ (-F \circ H)\circ H&=H \circ (-H\circ F) \circ F,\\
 -F^2 \circ H^2=-H^2 \circ F^2, &\text{ i.e., } F^2 \circ H^2=H^2 \circ F^2.
 \end{align*}
 Suppose none of the $p_i$'s or $q_j$'s have a linear term then 
 \[ DF_i(0)=\begin{pmatrix}
 0 & \beta_j \\
 -\rho_j & 0
 \end{pmatrix} \text{ and }
 DH_j(0)=\begin{pmatrix}
  0 & b_i \\
 -\de_i & 0
\end{pmatrix}
.\]
Thus $DF^2(0)$ and $DH^2(0)$ are diagonal matrices. Since $H^2(0)=0$ and $F^2(0)=0$,
\[ D(F^2 \circ H^2)(0)=DF^2(0). DH^2(0)=DH^2(0).DF^2(0)=D(H^2 \circ F^2)(0).\]
But from (\ref{eta_rel_2})
\[  D(F^2 \circ H^2)(0)=C_{\eta_2}. D(H^2 \circ F^2)(0)\]
thus proving $\eta_2=1.$ Hence $F^2 \circ H^2=H^2 \circ F^2.$

\medskip\no\textit{Case 2: } If $H(0) \neq 0$, then consider any fixed point of the H\'{e}non map, say $p=(p_1,p_2).$ Let $A_p$ be the affine map translating $0$ to $p$, i.e.,
\[ A_p(x,y)=(x+p_1,y+p_2) \text{ and }A_p^{-1}(x,y)=(x-p_1,y-p_2).\]
Define $\ti{H}=A_p^{-1} \circ H \circ A_p$ and $\ti{F}=A_p^{-1} \circ F \circ A_p.$ The both $\ti{H}$ and $\ti{F}$ are H\'{e}non maps and $\ti{H}(0)=0.$

\medskip\no \textit{Claim: }$K_{\ti{H}}^\pm=A_p^{-1}(K_H^\pm)$ and $\ti{F}(K_{\ti{H}}^\pm)=K_{\ti{H}}^\pm.$

\medskip\no For $z \in A_p^{-1}(K_H^+)$, $\ti{H}^n(z)$ is bounded for every $n$, thus $A_p^{-1}(K_H^+) \subset K_{\ti{H}}^+.$  Similarly, for $z \in A_p(K_{\ti{H}}^+)$, $H^n(z)$ is bounded for every $n$ proving $A_p(K_{\ti{H}}^+) \subset K_{H}^+.$ By an exactly similar argument for $H^{-1}$ and $K_H^-$ we have
 $$K_{\ti{H}}^-=A_p^{-1}(K_H^-), \text{ i.e., } K_{\ti{H}}^\pm=A_p^{-1}(K_H^\pm). $$  
Further as $F(K_H^\pm)=K_H^\pm$, $$\ti{F}(K_{\ti{H}}^\pm)=\ti{F}(A_p^{-1}(K_H^\pm))=A_p^{-1}(K_H^\pm)=K_{\ti{H}}^\pm.$$
Now by \textit{Case 1},
\[ \ti{F}^2 \circ \ti{H}^2=\ti{H}^2 \circ \ti{F}^2, \text{ or } F^2 \circ H^2=H^2 \circ F^2.\] 

\medskip \no Finally, if $F^{-1}$ is a H\'{e}non map then by the above arguments we have
\[ F^{-2} \circ H^2=H^2 \circ F^{-2}, \text{ i.e., } H^2 \circ F^2=F^2 \circ H^2.\]
\end{proof}
\no In the following example we show that Theorem \ref{improved_relation} is optimal, in the sense there exist H\'{e}non maps $H$ and $F$ such that $F(K_H^\pm)=K_H^\pm$ but they do no commute.
\begin{exam}\label{opti-com}
Let $H(x,y)=(y,y^2-x)$ and $F(x,y)=(\omega y, (\omega y)^2-\omega^2 x)$ where $\omega$ is the cube root of unity. Note that
\[ F(x,y)=C_\omega \circ H(x,y)=H(x,y) \circ C_{\omega^2}.\] Hence $F$ is a H\'{e}non map such that $F(K_H^\pm)=K_H^\pm.$ Now
\[ F \circ H=C_\omega \circ H^2=H^2 \circ C_\omega=H \circ F \circ C_{\omega^2}=C_{\omega^2} \circ H \circ F.\]
thus $H$ and $F$ do not commute.
\end{exam}
\no Finally, we prove Theorem \ref{KHpm} and \ref{injective} using Theorem \ref{improved_relation}.
\begin{proof}[Proof of Theorem \ref{KHpm}]
Recall that, by Remark \ref{equivalent}, it is enough to prove for the case $\mu_H=\mu_F.$ Further from Theorem \ref{Green}, it follows that 
 \[F \circ H=C_\eta \circ H \circ F=  H \circ F\circ C_\eta^{\pm}\] where 
 $C_\eta(x,y)=(\eta x, \eta^{-1} y).$ Now by applying exactly similar argument as in the proof of Theorem \ref{improved_relation}, it follows that
 \[ H^2 \circ F^2=F^2 \circ H^2.\]
 For $z \in K_H^+$
 \[ H^{2n} \circ F^2(z)=F^2 \circ H^{2n}(z) \text{ is bounded }.\] Thus $F^2(z) \in K_H^\pm$, or $F^2(K_H^+) \subset K_H^+.$ Similarly $F^{-2}(K_H^+)\subset K_H^+$. Also, applying the same arguments with $H^{-2n}$ it follows that $F^2(K_H^-)=K_H^-.$ Hence
 \[ F^2(K_H^\pm)=K_H^\pm \text { and } H^2(K_F^\pm)=K_F^\pm.\]
 Now for $z \in K_H^+$, $F^{2n}(z) \in V_R \cup V_R^-$ and $z \in K_F^+$, $H^{2n}(z) \in V_R \cup V_R^-$ proving $$K_H^+ \subset K_{F^2}^+=K_F^+ \text { and } K_F^+ \subset K_{H^2}^+=K_H^+.$$ By a similar argument for $H^{-1}$ and $F^{-1}$ it follows that $K_H^-=K_F^-$. Hence $ K_F^\pm=K_H^\pm$ and $G_H^\pm=G_F^\pm.$
\end{proof}
\begin{proof}[Proof of Theorem \ref{injective}]
Again, by Remark \ref{equivalent} it is sufficient to prove the statement only for the Green measure of $H$. Suppose $\{H_n\}$ is any sequence of H\'{e}non maps such that $\mu_{H_n}=\mu_H$ and $H_n \neq H.$ To prove the result, it is enough to prove that no subsequence of $\{H_n\}$ converges to $H$ in the topology of uniform convergence over compact subsets of $\mbb C^2$. In other words, there exist a neighbourhood $U_H$ around $H$ in $\cal{H}$ with the aforementioned topology, such that the map $H \to \mu_H$ is injective on $U_H.$ 

\medskip\no We will prove the result by contradiction. Suppose not, i.e., there exists a subsequence $\{H_{n_k}\}$ of $\{H_n\}$ such that 
$H_{n_k} \to H$ as $k \to \infty.$ With abuse of notation, we denote $\{H_{n_k}\}$ by $\{H_n\}.$ 

\medskip\no\textit{Claim: }$A_n=H_n^{-1} \circ H$ should be linear maps for $n-$sufficiently large.

\medskip\no Let $d_n$ be the degree of $H_n$ and $d$ the degree of $H.$ By Theorem \ref{KHpm}, $G_{H_n}^\pm=G_H^\pm$ and $K_{H_n}^{\pm}=K_H^\pm.$ Fix a $z \notin K_H^+$,
\[ G_H^+(z)=d^{-1}G_H^+(H(z))=d_nd^{-1}G_H^+(A_n(z)).\]
By assumption $A_n(z) \to z$ as $n \to \infty$ and $G_H^+(A_n(z)) \to G_H^+(z).$ Thus $d_nd^{-1} \to 1$ as $n \to \infty$, i.e., $d_n=d$ for $n-$sufficiently large. Now if $A_n$'s are non--linear then either $d_n<d$ or $d<d_n$, which is not true. Hence $A_n$'s are linear and by Theorem 1.1 from \cite{RigidityPaper}
\begin{align}\label{linear form}
A_n(x,y)=(\ti{a}_n x+\ti{b}_n,\ti{c}_n y+\ti{d}_n).
\end{align}
By Theorem \ref{improved_relation},
$ H_n^2 \circ H^2=H^2 \circ H_n^2$, hence  
\begin{align}\label{C_n}
C_n^{-1} \circ H^{2}\circ C_n=H^{2}
\end{align}
where $C_n=H_n^{2} \circ H^{-2}$ and $k \ge 1.$ Note that by an argument similar to $A_n$'s, $C_n$'s are also linear maps of the form \ref{linear form}. Let 
\[ C_n(x,y)=(a_n x+b_n, c_n y+d_n).\]
\no \textit{Case 1: } Suppose $H(0)=0.$ Note that the number of fixed point of $H^2$ should be finite. Let $S_{H^2}$ denote the set of fixed points of $H^2$, i.e.,
\[ S_{H^2}=\{ p \in \mbb C^2: H^2(p)=p\} \text{ and } \# S_{H^2}=N_0 \ge 1.\]
From (\ref{C_n}) it follows that $C_n(0) \in S_{H^2}$, hence $(b_n,d_n) \in S_{H^2}.$ Thus there are only finitely many choice for $b_n$'s and $d_n$'s. Now by Lemma \ref{normal theorem} and Lemma \ref{0-fixed 2}
\[ H^2=\ti{H}_m \circ \cdots \circ \ti{H}_1\]
such that $m \ge 2$ and $\ti{H}_i(x,y)=(y,p_i(y)-\de_i x)$ where $p_i(0)=0$  and degree of $p_i$, say $d_i \ge 2$ for every $1 \le i \le m.$ Note that
\begin{align}\label{pi_2}
\pi_2 \circ \ti{H}_m^{-1} \circ C_n \circ \ti{H}_m=a_n y+b_n.
\end{align}
Hence $\ti{H}_m^{-1} \circ C_n \circ \ti{H}_m$ should be a linear map of the (\ref{linear form}), otherwise there will be an inconsistency in the degree of $y$ in both the sides. This means
\begin{align}\label{pi_1}
 \pi_1 \circ \ti{H}_m^{-1} \circ C_n \circ \ti{H}_m=\de_m^{-1}(p_m(a_n y+b_n)-c_np_m(y)-d_n)+c_n x=c_n x+d'_n
\end{align}
for some $d'_n \in \mbb C.$ Thus from (\ref{pi_2}) and (\ref{pi_1}) it follows that
\[ a_n^{d_m}=c_n \text{ and } \ti{C}_n(x,y)=\ti{H}_m^{-1} \circ C_n \circ \ti{H}_m(x,y)=(c_n x+d_n', a_n y+b_n).\]
By a similar argument for $\ti{H}_{m-1} \circ \ti{C}_n \circ \ti{H}_{m-1}$ it follows that $c_n^{d_{m-1}}=a_n.$ Thus both $a_n$'s and $c_n$'s should be $(d_md_{m-1}-1)-$th root of unity and there can be only a finitely many choice for both $a_n$'s and $c_n$'s. So there are only finitely many possible choice for the elements in the sequence $\{C_n\}$ and hence for $H_n^2$'s. Now as $$H_n^2=C_n H^2 \to H^2 \text{ as }n \to \infty,$$ it follows that $H_n^2=H^2$ for $n$ sufficiently large (say $n_0$). Thus $H_n$'s should be a square root of $H^2$ for all $n \ge n_0.$ By Theorem 4.1 in \cite{RootBuzzard}, there can only be a finitely many choice for $H_n$'s again. Hence, $H_n=H$ for $n \ge \ti{n}_0$ (sufficiently large). This completes the proof for \textit{Case 1}.

\medskip\no \textit{Case 2: }Suppose $H(0) \neq 0$. Let $p_0$ be a fixed point of $H$, consider $\ti{H}=A_{p_0}^{-1} \circ H \circ A_{p_0}$ where $A_{p_0}(z)=z+p_0$, for $z \in \mbb C^2.$ By assumption $H_n \to H$, hence
$$A_{p_0}^{-1} \circ H_n \circ A_{p_0} \to \ti{H}$$ as $n \to \infty.$ Now by \textit{Case 1}, $A_{p_0}^{-1} \circ H_n \circ A_{p_0} = \ti{H}$, i.e., $H_n=H$ for $n$ sufficiently large. This completes the proof.
\end{proof}

\end{document}